\numberwithin{equation}{section}
\numberwithin{figure}{section}
\theoremstyle{plain}
\newtheorem{thm}{\protect\theoremname}[section]
\theoremstyle{plain}
\theoremstyle{definition}
\theoremstyle{plain}
\theoremstyle{plain}
\theoremstyle{plain}
\providecommand{\definitionname}{Definition}
\providecommand{\lemmaname}{Lemma}
\providecommand{\theoremname}{Theorem}
\providecommand{\corollaryname}{Corollary}
\providecommand{\remarkname}{Remark}
\providecommand{\propositionname}{Proposition}
\DeclareMathOperator{\loc}{loc}
\DeclareMathOperator{\dist}{dist}
\DeclareMathOperator*{\esssup}{esssup}
\DeclareMathOperator{\cp}{cap}
\DeclareMathOperator{\ACL}{ACL}
\DeclareMathOperator{\BMO}{BMO}
\begin{document}

\title[Composition operators, $Q$-mappings and weighted Sobolev inequalities]{Composition operators on Sobolev spaces, $Q$-mappings and weighted Sobolev inequalities}

\author{Alexander Menovschikov and Alexander Ukhlov}
\begin{abstract}
In this paper we give connections between mappings which generate bounded composition operators on Sobolev spaces and $Q$-mappings. On this base we obtain measure distortion properties $Q$-homeomorphisms. Using the composition operators on Sobolev spaces we obtain weighted Sobolev inequalities with special weights which are Jacobians of $Q$-mappings.
\end{abstract}
\maketitle
\footnotetext{\textbf{Key words and phrases:} Sobolev spaces, Quasiconformal mappings} 
\footnotetext{\textbf{2000
Mathematics Subject Classification:} 46E35, 30C65.}
\footnotetext{The first named author was supported by the grant GA\v{C}R 20-19018Y.}

\section{Introduction }

The notion of $Q$-mappings was introduced in \cite{MRSY01} (see also \cite{MRSY04}--\cite{MRSY09}) as a generalization of quasiconformal mappings. The main idea behind such generalization is a connection between quasiconformal mappings theory and Beltrami equations. Recall that a  homeomorphism $\varphi: \Omega\to\widetilde{\Omega}$ of domains
$\Omega,\widetilde{\Omega}\subset \mathbb R^n$ is called a {\it $Q$-homeomorphism}, with a non-negative measurable function $Q$, if
$$
M\left(\varphi \Gamma\right)\leqslant \int\limits_{\Omega} Q(x)\cdot
\rho^{n}(x)dx
$$
for every family $\Gamma$ of rectifiable paths in $\Omega$ and every admissible function $\rho$ for $\Gamma$.

Composition operators on Sobolev spaces arise in \cite{M69,Sl41} and represent the significant part of the geometric analysis of Sobolev spaces due to its applications to the Sobolev embedding theory, that includes embedding theorems, extension theorems and traces theorems \cite{GGu,GS82,GU,GU16_2,KZ20} and to the spectral theory of elliptic operators, see, for example, \cite{GU16,GU17}. 

The geometric theory of composition operators on Sobolev spaces \cite{U93,VU04} is closely connected with various generalizations of quasiconformal mappings which are defined in capacity (moduli) terms and were studied by many authors, see, for example, \cite{HK14,K86,Kr64,MRSY09,P69,R96,S85}. In \cite{MU21}, we consider $BMO$-quasiconformal mappings in connection with composition operators on Hardy--Sobolev spaces. In the present work we consider connections of composition operators on Sobolev spaces with $Q$-homeomorphisms $\varphi:\Omega\to\widetilde{\Omega}$, $Q\in L_1(\Omega)$.
$Q$-mappings with a function $Q$ belongs to the $A_n$-Muckenhoupt class are mappings which inverse to homeomorphisms generating bounded composition operators on the weighted Sobolev spaces \cite{UV08} (see, also \cite{V20}). 

In the present article we prove that in the case $n-1=q<p=n$ mappings generated bounded composition operators on Sobolev spaces are $Q$-homeomorphisms with $Q\in L_1(\Omega)$. Inverse, $Q$-homeomorphisms with $Q\in L_1(\Omega)$ generate a bounded composition operators
$$
\varphi^{\ast}: L^1_n(\widetilde{\Omega})\to L^1_1(\Omega).
$$

In the case $n=2$  we prove the following main result of this article which gives connections between mappings which generate bounded composition operators on Sobolev spaces and $Q$-mappings.

\begin{thm}
\label{main1}
Let $\Omega$ and $\widetilde{\Omega}$ are bounded domains in $\mathbb R^2$. Then $\varphi:\Omega\to\widetilde{\Omega}$ is a $Q$-homeomorphism with $Q\in L_1(\Omega)$, if and only if $\varphi$ generates by the composition rule $\varphi^{\ast}(f)=f\circ\varphi$ a bounded operator
$$
\varphi^{\ast}: L^1_2(\widetilde{\Omega}) \to L^1_1(\Omega).
$$
\end{thm}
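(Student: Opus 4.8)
The plan is to route both conditions through a single pointwise quantity, the inner dilatation of $\varphi$, and then to invoke the analytic description of composition operators on Sobolev spaces. Recall first the Vodopyanov--Ukhlov characterization in the range $1=q<p=2$: a homeomorphism $\varphi$ generates a bounded operator $\varphi^{\ast}\colon L^1_2(\widetilde{\Omega})\to L^1_1(\Omega)$ if and only if $\varphi\in W^1_{1,\loc}(\Omega)$, has finite distortion, and the distortion function
$$
K_{2,1}(x,\varphi)=\frac{|D\varphi(x)|^2}{|J_\varphi(x)|}
$$
belongs to $L_1(\Omega)$, with $\|\varphi^{\ast}\|=\|K_{2,1}(\cdot,\varphi)\|_{L_1}^{1/2}$. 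The decisive planar observation is that a $2\times 2$ matrix $A$ satisfies $|\adj A|=|A|$, so this distortion function coincides with the inner dilatation
$$
K_I(x,\varphi)=\frac{|\adj D\varphi(x)|^2}{|J_\varphi(x)|}=\frac{|D\varphi(x)|^2}{|J_\varphi(x)|}.
$$
Hence in both implications the single object to control is $K_I(\cdot,\varphi)\in L_1(\Omega)$.

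For the implication that a bounded operator yields a $Q$-homeomorphism, suppose $\varphi^{\ast}$ is bounded. By the cited characterization $\varphi$ is a Sobolev homeomorphism of finite distortion with $K_I(\cdot,\varphi)\in L_1(\Omega)$. For such mappings the classical $K_I$-modulus inequality holds,
$$
M(\varphi\Gamma)\leqslant \int_\Omega K_I(x,\varphi)\,\rho^{2}(x)\,dx,
$$
for every family $\Gamma$ of rectifiable paths in $\Omega$ and every admissible $\rho$. Setting $Q(x)=K_I(x,\varphi)$, which is non-negative, measurable and lies in $L_1(\Omega)$, we conclude that $\varphi$ is a $Q$-homeomorphism with $Q\in L_1(\Omega)$. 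The $K_I$-inequality itself is obtained from the absolute continuity of $\varphi$ on almost all curves together with the change-of-variables formula, which are available because in the present Sobolev class $\varphi$ is $\ACL$, differentiable almost everywhere, and enjoys Lusin's $(N)$-property.

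For the converse implication, let $\varphi$ be a $Q$-homeomorphism with $Q\in L_1(\Omega)$. The regularity theory of $Q$-homeomorphisms with locally integrable $Q$ in the plane guarantees that $\varphi$ belongs to $W^1_{1,\loc}(\Omega)$, is differentiable almost everywhere, and is a mapping of finite distortion; moreover, testing the modulus inequality on suitable curve families yields the pointwise bound $K_I(x,\varphi)\leqslant Q(x)$ almost everywhere. Consequently
$$
\int_\Omega K_{2,1}(x,\varphi)\,dx=\int_\Omega K_I(x,\varphi)\,dx\leqslant \int_\Omega Q(x)\,dx<\infty,
$$
so the distortion function lies in $L_1(\Omega)$ and the Vodopyanov--Ukhlov characterization returns the boundedness of $\varphi^{\ast}\colon L^1_2(\widetilde{\Omega})\to L^1_1(\Omega)$.

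The main obstacle is precisely the passage, in this converse direction, from the integral modulus definition of a $Q$-homeomorphism to pointwise Sobolev regularity and the almost-everywhere domination $K_I\leqslant Q$. Extracting $W^1_{1,\loc}$-membership and finite distortion from a purely modulus-theoretic hypothesis requires the capacity and modulus estimates combined with planar almost-everywhere differentiability, and the sharp bound $K_I\leqslant Q$ must be recovered by evaluating the modulus inequality on carefully chosen (for instance spherical-ring or coordinate) families and localizing. The forward direction is comparatively routine once the composition-operator theorem is in hand, the only point of care being the verification of the $(N)$-property needed to justify the $K_I$-inequality.
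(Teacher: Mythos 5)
Your reduction of both implications to the single quantity $K^I(x,\varphi)=K^O(x,\varphi)=|D\varphi(x)|^2/|J(x,\varphi)|$, and your treatment of the direction ``$Q$-homeomorphism $\Rightarrow$ bounded operator,'' match the paper: it quotes the pointwise bound $|D\varphi(x)|\le C\,|J(x,\varphi)|^{1/2}Q^{1/2}(x)$ from the regularity theory of $Q$-homeomorphisms and then applies the analytic characterization of composition operators (Theorem~\ref{CompTh}). The gap is in the other direction. You assert that for a $W^1_{1,\loc}$ homeomorphism of finite distortion with $K^I\in L_1(\Omega)$ the modulus inequality $M(\varphi\Gamma)\le\int_\Omega K^I(x,\varphi)\rho^2(x)\,dx$ is ``classical'' and follows from the $\ACL$ property, a.e.\ differentiability and the Luzin $N$-property of $\varphi$. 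Those are the wrong ingredients: the standard argument pushes the admissible function forward by setting $\widetilde\rho(y)=\rho(\varphi^{-1}(y))|D\varphi^{-1}(y)|$, and to show that $\widetilde\rho$ is admissible for a full-modulus subfamily of $\varphi\Gamma$ one needs $\varphi^{-1}$ to be a Sobolev mapping that is absolutely continuous on almost every path of the \emph{image} family (Fuglede's theorem applied to $\varphi^{-1}$), together with the change of variables through $J_{\varphi^{-1}}$. Nothing in your hypotheses delivers this: a planar $W^1_{1,\loc}$ homeomorphism of finite distortion need not even satisfy the Luzin $N$-property, and path-regularity of $\varphi$ says nothing about path-regularity of $\varphi^{-1}$.

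The paper supplies exactly this missing ingredient with a specifically two-dimensional tool: the composition duality theorem for planar domains (Theorem~\ref{CompThDP}), which from the boundedness of $\varphi^{\ast}:L^1_2(\widetilde\Omega)\to L^1_1(\Omega)$ produces a bounded operator $(\varphi^{-1})^{\ast}:L^1_\infty(\Omega)\to L^1_2(\widetilde\Omega)$, hence $\varphi^{-1}\in L^1_2(\widetilde\Omega)$; since here $p=2=n$, the inverse is $\ACL^2$, differentiable a.e.\ and has the Luzin $N$-property, and only then does the Fuglede/change-of-variables computation of Theorem~\ref{comp-Q} go through. This duality step is the reason the theorem holds in this form precisely in the plane; your proposal never establishes any regularity of $\varphi^{-1}$, so the direction you call ``comparatively routine'' is where the argument is actually incomplete. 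Two smaller points: the paper claims only $\|\varphi^{\ast}\|\le\|K_2\mid L_2(\Omega)\|$, not equality; and the pointwise bound $K^I\le Q$ you invoke in the other direction is quoted from the literature with a dimensional constant, which is harmless for the integrability conclusion.
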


Hence, $Q$-homeomorphisms $\varphi: \Omega \to \widetilde\Omega$, where $\Omega$ and $\widetilde{\Omega}$ are bounded domains in $\mathbb R^2$, coincide with mappings of finite distortion \cite{HK14,VGR} such that
$$
\int\limits_{\Omega}\frac{|D\varphi(x)|^2}{|J(x,\varphi)|}~dx<\infty.
$$

This correspondence theorem allows us to establish new measure distortion properties of $Q$-homeomorphisms.

\begin{thm}
\label{inverse} 
Let $\varphi: \Omega \to \widetilde\Omega$ be a $Q$-homeomorphism with $Q \in L_1(\Omega)$. Then 
$$
|\varphi^{-1}(\widetilde{A})| \leq C \int\limits_{\widetilde{A}} \left(H_q(y)\right)^{\frac{n}{n-1}}~dy
$$
for any measurable subset $\widetilde{A}\subset\widetilde{\Omega}$ and, hence, $\varphi^{-1}: \widetilde\Omega \to \Omega$ is absolutely continuous with respect to the Lebesgue measure. In particular, $J(\cdot, \varphi^{-1}) \in L_1(\widetilde\Omega)$.
\end{thm}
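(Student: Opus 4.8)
The plan is to transfer the problem to the well-understood change-of-variables picture for $\varphi$ and then invert. First I would invoke the correspondence established above: by Theorem~\ref{main1} (for $n=2$) and the higher-dimensional statement quoted in the introduction, a $Q$-homeomorphism with $Q\in L_1(\Omega)$ is exactly a map generating a bounded composition operator $\varphi^{\ast}\colon L^1_n(\widetilde\Omega)\to L^1_1(\Omega)$. Hence $\varphi\in W^{1,1}_{\loc}(\Omega)$ is a homeomorphism of finite distortion, differentiable a.e., with
$$
\int_{\Omega}\left(\frac{|D\varphi(x)|^n}{|J(x,\varphi)|}\right)^{\frac{1}{n-1}}dx<\infty .
$$
Writing the distortion on the image domain as $H_q(y)=|D\varphi(\varphi^{-1}(y))|/|J(\varphi^{-1}(y),\varphi)|$, the (formal) change of variables $y=\varphi(x)$ turns the last integral into $\int_{\widetilde\Omega}(H_q(y))^{\frac{n}{n-1}}\,dy$, so that $(H_q)^{\frac{n}{n-1}}\in L_1(\widetilde\Omega)$; this is the integrable majorant that must appear on the right-hand side.

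Second, I would record the pointwise inequality that drives everything. At a.e. point $x$ with $J(x,\varphi)\neq0$, Hadamard's inequality $|J(x,\varphi)|\leq|D\varphi(x)|^n$ rearranges to
$$
\frac{1}{|J(x,\varphi)|}\leq\frac{|D\varphi(x)|^{\frac{n}{n-1}}}{|J(x,\varphi)|^{\frac{n}{n-1}}}=\bigl(H_q(\varphi(x))\bigr)^{\frac{n}{n-1}} .
$$
Setting $A=\varphi^{-1}(\widetilde A)$ and $Z=\{x:J(x,\varphi)=0\}$, the area formula for the a.e.-differentiable homeomorphism $\varphi$ applied with density $1/|J(\cdot,\varphi)|$ gives
$$
|A\setminus Z|=\int_{A\setminus Z}\frac{|J(x,\varphi)|}{|J(x,\varphi)|}\,dx=\int_{\varphi(A\setminus Z)}\frac{dy}{|J(\varphi^{-1}(y),\varphi)|}\leq\int_{\widetilde A}\bigl(H_q(y)\bigr)^{\frac{n}{n-1}}\,dy .
$$
Thus the desired estimate holds up to the contribution of the degenerate set $A\cap Z$.

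The crux --- and the step I expect to fight with --- is to show that $Z$ is negligible, equivalently that the pull-back measure $\mu(\widetilde A)=|\varphi^{-1}(\widetilde A)|$ has no singular part; note that integrability of the distortion alone does not forbid $D\varphi=0$ on a set of positive measure, so this cannot be read off from the previous step. Here I would use the composition operator itself rather than the area formula: associate to $\varphi^{\ast}$ the set function $\Phi(\widetilde U)=\sup\{\,\|\varphi^{\ast}f\|^{\frac{n}{n-1}}_{L^1_1(\Omega)}:f\in L^1_n(\widetilde\Omega),\ \operatorname{supp}f\subset\widetilde U,\ \|f\|_{L^1_n}\leq1\,\}$ on open $\widetilde U\subset\widetilde\Omega$. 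In the Vodopyanov--Ukhlov framework $\Phi$ is bounded, monotone and countably additive, hence has a finite Radon--Nikodym derivative a.e. with $\Phi'(y)\leq(H_q(y))^{\frac{n}{n-1}}$ and $\int_{\widetilde\Omega}\Phi'\,dy<\infty$; testing $\Phi$ on balls against capacitary Lipschitz functions yields $\mu(\widetilde U)\leq C\,\Phi(\widetilde U)$ for every open $\widetilde U$. Consequently $\mu$ is absolutely continuous, which forces $|Z|=0$ and upgrades the previous bound to $|\varphi^{-1}(\widetilde A)|\leq C\int_{\widetilde A}(H_q(y))^{\frac{n}{n-1}}\,dy$ for all measurable $\widetilde A$. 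Absolute continuity of $\varphi^{-1}$ is then immediate, and since $|J(y,\varphi^{-1})|=1/|J(\varphi^{-1}(y),\varphi)|\leq(H_q(y))^{\frac{n}{n-1}}$ a.e., we conclude $J(\cdot,\varphi^{-1})\in L_1(\widetilde\Omega)$.
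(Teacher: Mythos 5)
Your proposal is correct in substance and reaches the same destination, but it redistributes the work differently from the paper. The paper's proof is a one-line assembly: Theorem~\ref{Q-comp} (with $s=1$) shows a $Q$-homeomorphism with $Q\in L_1(\Omega)$ is a weak $(n,1)$-quasiconformal mapping, Theorem~\ref{measure} with $q=1$ (quoted as a reformulation of the measure distortion property of \cite{VU02}) gives the estimate with exponent $\frac{n\cdot 1}{n-1}=\frac{n}{n-1}$, and Theorem~\ref{int} gives $J_{\varphi^{-1}}\in L_1(\widetilde\Omega)$. You instead try to reprove the measure estimate: your treatment of the nondegenerate part via Hadamard's inequality $|J|\le |D\varphi|^n$, the resulting pointwise bound $1/|J(\varphi^{-1}(y),\varphi)|\le (H_1(y))^{n/(n-1)}$, and the change of variables is clean, correct, and even yields $C=1$ off the degenerate set; and your identification of $|Z|$, $Z=\{J(x,\varphi)=0\}$, as the genuine obstruction is exactly right (if $|Z|>0$ the claimed inequality fails, since $H_q$ vanishes on $\varphi(S\cup Z)$ while $|\varphi(Z\setminus S)|=0$). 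For that last step your set-function argument ($\Phi$, its additivity, the derivative bound $\Phi'\le (H_q)^{n/(n-1)}$, and the capacitary test on balls) is only sketched, and it is precisely the nontrivial content of the \cite{VU02} result the paper cites, so you are at roughly the same level of rigor as the paper but with more machinery exposed. Note there is a cheaper route available inside the paper itself: Section 3 already records (citing \cite{MRSY09}) that $Q$-homeomorphisms with $Q\in L_{1,\loc}(\Omega)$ are differentiable a.e.\ and possess the Luzin $N^{-1}$-property; since a.e.\ differentiability gives $|\varphi(Z\setminus S)|\le \int_{Z\setminus S}|J(x,\varphi)|\,dx=0$, the $N^{-1}$-property immediately forces $|Z|=0$, and your Hadamard/change-of-variables computation then finishes the proof without any set-function apparatus. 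Your final deduction of $J(\cdot,\varphi^{-1})\in L_1(\widetilde\Omega)$ from the pointwise bound is equivalent to the paper's Lebesgue-differentiation argument in Theorem~\ref{int}.
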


On the base of methods suggested in \cite{GGu,GU16}, we give applications of $Q$-homeomorphisms to the weighted Sobolev type embedding theorems:

\begin{thm}\label{sobolevineq}
Let a bounded domain $\widetilde{\Omega}\subset\mathbb R^n$ be such that there exists a weak $(p,q)$-quasiconformal mapping $\varphi: \Omega\to\widetilde{\Omega}$, $1 \leq q \leq p \leq n$, of a bounded $(s,q)$-Sobolev--Poincar\'e domain $\Omega\subset\mathbb R^n$ onto $\widetilde{\Omega}$, $q < s$. Then the weighted Poincar\'e inequality 
$$
\inf\limits_{c\in\mathbb R}\left(\int\limits_{\widetilde{\Omega}}|f(y)-c|^s w(y) \, dy\right)^{\frac{1}{s}}\leq B^w_{s,p}(\widetilde{\Omega})
\left(\int\limits_{\widetilde{\Omega}}|\nabla f(y)|^p \, dy \right)^{\frac{1}{p}} 
$$
holds for any function $f\in W^1_p(\widetilde{\Omega})$ with the weight $w(y)=J_{\varphi^{-1}}(y)$, where $J_{\varphi^{-1}}(y)$ is a volume derivative of the inverse mapping to $\varphi:\Omega\to\widetilde{\Omega}$.
\end{thm}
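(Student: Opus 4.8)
The plan is to transport the $(s,q)$-Sobolev--Poincar\'e inequality available on the ``good'' domain $\Omega$ to the weighted inequality on $\widetilde{\Omega}$ by precomposition with $\varphi$, the weight $w=J_{\varphi^{-1}}$ appearing precisely as the density produced by the change of variables. First I would fix $f\in W^1_p(\widetilde{\Omega})$ and set $g=\varphi^{\ast}(f)=f\circ\varphi$. Because $\varphi$ is a weak $(p,q)$-quasiconformal mapping, the composition operator $\varphi^{\ast}\colon L^1_p(\widetilde{\Omega})\to L^1_q(\Omega)$ is bounded, so $g\in L^1_q(\Omega)$ and
$$
\left(\int_\Omega|\nabla g(x)|^q\,dx\right)^{1/q}\le K_{p,q}(\varphi)\left(\int_{\widetilde{\Omega}}|\nabla f(y)|^p\,dy\right)^{1/p},
$$
where $K_{p,q}(\varphi)$ denotes the operator norm of $\varphi^{\ast}$. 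Since $\Omega$ is an $(s,q)$-Sobolev--Poincar\'e domain and $q<s$, I would then apply its defining inequality to $g$:
$$
\inf_{c\in\mathbb R}\left(\int_\Omega|g(x)-c|^s\,dx\right)^{1/s}\le B_{s,q}(\Omega)\left(\int_\Omega|\nabla g(x)|^q\,dx\right)^{1/q}.
$$

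The decisive step is the change of variables that converts the left-hand integral over $\Omega$ into a weighted integral over $\widetilde{\Omega}$. I would consider the finite Borel measure $\Phi(\widetilde{A})=|\varphi^{-1}(\widetilde{A})|$ on $\widetilde{\Omega}$, which is the pushforward $\varphi_{\ast}(\mathcal L^n|_\Omega)$; by definition its volume derivative is $J_{\varphi^{-1}}=w$, the density of its absolutely continuous part. For each fixed $c$, applying the pushforward identity $\int_{\widetilde{\Omega}}h\,d\Phi=\int_\Omega h(\varphi(x))\,dx$ to the nonnegative function $h(y)=|f(y)-c|^s$ and discarding the (nonnegative) singular part of $\Phi$ gives
$$
\int_{\widetilde{\Omega}}|f(y)-c|^s w(y)\,dy\le\int_{\widetilde{\Omega}}|f(y)-c|^s\,d\Phi(y)=\int_\Omega|f(\varphi(x))-c|^s\,dx=\int_\Omega|g(x)-c|^s\,dx.
$$
Taking the infimum over $c\in\mathbb R$ and chaining with the two displayed estimates above yields the assertion with $B^w_{s,p}(\widetilde{\Omega})=B_{s,q}(\Omega)\,K_{p,q}(\varphi)$. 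If one prefers to work with the exact change of variables rather than this inequality, this is exactly where Theorem~\ref{inverse} enters: it guarantees that $\Phi$ is absolutely continuous, so that its singular part vanishes.

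I expect the analytic heart of the argument to be hidden in the first step, namely that precomposition with $\varphi$ actually maps $W^1_p(\widetilde{\Omega})$ into $L^1_q(\Omega)$ with the stated gradient bound: this rests on the pointwise chain-rule estimate $|\nabla(f\circ\varphi)|\le|D\varphi|\,|(\nabla f)\circ\varphi|$ together with the control of the operator distortion furnished by the weak $(p,q)$-quasiconformality, and it relies on the $\mathrm{ACL}$ and Luzin-type regularity of $\varphi$. The remaining obstacle is a routine approximation: since the composition estimate and the Sobolev--Poincar\'e inequality are most cleanly verified for Lipschitz representatives, I would first establish the chain of estimates for such $f$ and then pass to the limit in $W^1_p(\widetilde{\Omega})$, using that $w\in L_1(\widetilde{\Omega})$, guaranteed by Theorem~\ref{inverse}, to control the weighted left-hand side under the limit.
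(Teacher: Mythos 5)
Your argument is correct and follows the same overall route as the paper: pull $f$ back to $g=f\circ\varphi$, use boundedness of $\varphi^{\ast}\colon L^1_p(\widetilde{\Omega})\to L^1_q(\Omega)$ (Theorem~\ref{CompTh}) to control $\|\nabla g\|_{L_q(\Omega)}$ by $\|\nabla f\|_{L_p(\widetilde{\Omega})}$, apply the $(s,q)$-Sobolev--Poincar\'e inequality on $\Omega$, and convert $\int_{\Omega}|g-c|^s\,dx$ into the weighted integral over $\widetilde{\Omega}$ with weight $J_{\varphi^{-1}}$, arriving at the same constant $B_{s,q}(\Omega)K_{p,q}(\varphi;\Omega)$. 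The one place you genuinely diverge is the change-of-variables step: the paper invokes the Luzin $N^{-1}$ property of weak $(p,q)$-quasiconformal mappings ($1\leq q\leq p\leq n$) to use the exact identity \eqref{chvf2}, whereas you decompose the pushforward measure $\Phi(\widetilde{A})=|\varphi^{-1}(\widetilde{A})|$ and simply discard its singular part, which yields the needed one-sided inequality without any absolute-continuity hypothesis; this is a slightly more robust (and equally valid) justification, since the estimate only needs to go one way. Two small inaccuracies in your side remarks: absolute continuity of $\Phi$ for the exact formula is not what Theorem~\ref{inverse} provides (that theorem concerns $Q$-homeomorphisms, i.e.\ the weak $(n,q)$-case, not general $p\leq n$) --- the paper instead cites the $N^{-1}$ property from \cite{VU98,VU02}; and $w\in L_1(\widetilde{\Omega})$ needs no theorem at all, since $\int_{\widetilde{\Omega}}w\,dy\leq\Phi(\widetilde{\Omega})=|\Omega|<\infty$. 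Neither affects the validity of your main line, and the closing approximation step is unnecessary because the composition operator is by definition bounded on all of $L^1_p(\widetilde{\Omega})$.
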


Note, that weighted Sobolev inequalities with quasiconformal weights were considered, for example, in \cite{GU14,HeK95}.

Composition operators on Sobolev spaces have applications to the spectral theory of elliptic operators, see, for example, \cite{GPU18_3,GU16,GU17}. In some cases the composition operators method allows to obtain better estimates than the classical L.~E.~Payne and H.~F.~Weinberger estimates in convex domains \cite{PW}. In the last section of the present work, as an example of applications $Q$-mappings, we give spectral estimates of the Neumann-Laplace operator in H\"older singular domains.

This paper is organized as follows. Section 2 contains definitions and preliminary results.
In Section 3, we prove a connection between composition operators on Sobolev spaces and $Q$-homeomorphisms and consider its measure properties. Section 4 is devoted to the weighted Sobolev inequalities and in the final section, Section 5, we consider applications of $Q$-homeomorphisms to the spectral theory of elliptic operators.

\section{Sobolev spaces and composition operators}

\subsection{Sobolev spaces}

Let us recall the basic notions of the Sobolev spaces and the change of variable formula.

Let $\Omega$ be an open subset of $\mathbb R^n$. The Sobolev space $W^1_p(\Omega)$, $1\leq p\leq\infty$, is defined \cite{M}
as a Banach space of locally integrable weakly differentiable functions
$f:\Omega\to\mathbb{R}$ equipped with the following norm: 
\[
\|f\mid W^1_p(\Omega)\|=\| f\mid L_p(\Omega)\|+\|\nabla f\mid L_p(\Omega)\|,
\]
where $\nabla f$ is the weak gradient of the function $f$, i.~e. $ \nabla f = (\frac{\partial f}{\partial x_1},...,\frac{\partial f}{\partial x_n})$.

The homogeneous seminormed Sobolev space $L^1_p(\Omega)$, $1\leq p\leq\infty$, is defined as a space
of locally integrable weakly differentiable functions $f:\Omega\to\mathbb{R}$ equipped
with the following seminorm: 
\[
\|f\mid L^1_p(\Omega)\|=\|\nabla f\mid L_p(\Omega)\|.
\]

In the Sobolev spaces theory, a crucial role is played by capacity as an outer measure associated with Sobolev spaces \cite{M}. In accordance to this approach, elements of Sobolev spaces $W^1_p(\Omega)$ are equivalence classes up to a set of $p$-capacity zero \cite{MH72}. The exact definition of capacity will be given below.

The mapping $\varphi:\Omega\to\mathbb{R}^{n}$ belongs to the Sobolev space $W^1_{p,\loc}(\Omega,\mathbb R^n)$, if its coordinate functions belongs to $W^1_{p,\loc}(\Omega)$. In this case, the formal Jacobi matrix $D\varphi(x)$ and its determinant (Jacobian) $J(x,\varphi)$
are well defined at almost all points $x\in\Omega$. The norm $|D\varphi(x)|$ is an operator norm of $D\varphi(x)$,

Let us recall the change of variable formula in the Lebesgue integral \cite{F69, H93}.
Suppose a homeomorphism $\varphi : \Omega\to \mathbb R^n$ be such that
there exists a collection of closed sets $A_k\subset A_{k+1}\subset \Omega$, $k=1,2,...$, for which restrictions $\varphi \vert_{A_k}$ are Lipschitz mappings on the sets $A_k$ and 
$$
\biggl|\Omega\setminus\sum\limits_{k=1}^{\infty}A_k\biggr|=0.
$$
Then there exists a measurable set $S\subset \Omega$, $|S|=0$, such that  the homeomorphism $\varphi:\Omega\setminus S \to \mathbb R^n$ has the Luzin $N$-property (the image of a set of measure zero has measure zero) and the change of variable formula
\begin{equation}
\label{chvf}
\int\limits_E f\circ\varphi (x) |J(x,\varphi)|~dx=\int\limits_{\mathbb R^n\setminus \varphi(S)} f(y)~dy
\end{equation}
holds for every measurable set $E\subset \Omega$ and every non-negative measurable function $f: \mathbb R^n\to\mathbb R$.

Note, that Sobolev homeomorphisms of the class $W^1_{1,\loc}(\Omega)$ satisfy the conditions of the change of variable formula \cite{H93} and, therefore, for Sobolev homeomorphisms the change of variable formula \eqref{chvf} holds.

If the mapping $\varphi$ possesses the Luzin $N$-property, then $|\varphi (S)|=0$ and the second integral can be rewritten as the integral on $\mathbb R^n$. Note, that Sobolev homeomorphisms of the class $L^1_p(\Omega)$, $p\geq n$, possess the Luzin $N$-property \cite{VGR}.

The following formula can be found, for example, in \cite{H50}. Let  $\varphi : \Omega\to \widetilde\Omega$ be a homeomorphism. We define a volume derivative of the inverse mapping as
$$
J_{\varphi^{-1}}(y) := \lim_{r \to 0} \frac{|\varphi^{-1}(B(y,r))|}{|B(y,r)|}.
$$
If $\varphi : \Omega\to \widetilde\Omega$ possesses the Luzin $N^{-1}$-property (the preimage of a set of measure zero has measure zero), then the following change of variables formula holds:
\begin{equation}
\label{chvf2}
\int\limits_{\Omega}f\circ\varphi (x) dx=\int\limits_{\widetilde\Omega} f(y) J_{\varphi^{-1}}(y) dy.
\end{equation}

\subsection{Composition operators on Sobolev spaces}

Let $\Omega$ and $\widetilde{\Omega}$ be domains in the Euclidean space $\mathbb R^n$. Then a homeomorphism $\varphi:\Omega\to\widetilde{\Omega}$ generates a bounded composition
operator 
\[
\varphi^{\ast}:L^1_p(\widetilde{\Omega})\to L^1_q(\Omega),\,\,\,1\leq q\leq p\leq\infty,
\]
by the composition rule $\varphi^{\ast}(f)=f\circ\varphi$, if for
any function $f\in L^1_p(\widetilde{\Omega})$, the composition $\varphi^{\ast}(f)\in L^1_q(\Omega)$
is defined quasi-everywhere in $\Omega$ and there exists a constant $K_{p,q}(\varphi;\Omega)<\infty$ such that 
\[
\|\varphi^{\ast}(f)\mid L^1_q(\Omega)\|\leq K_{p,q}(\varphi;\Omega)\|f\mid L^1_p(\widetilde{\Omega})\|.
\]

Recall that the $p$-dilatation \cite{Ger69} of a Sobolev mapping $\varphi: \Omega\to \widetilde{\Omega}$ at a point $x\in\Omega$ is defined as
$$
K_p(x)=\inf \{k(x): |D\varphi(x)|\leq k(x) |J(x,\varphi)|^{\frac{1}{p}}\}.
$$

The following theorem gives a characterization of composition operators in terms of integral characteristics of mappings of finite distortion. Recall that a weakly differentiable mapping $\varphi:\Omega\to\mathbb{R}^{n}$ is a mapping of finite distortion if $D\varphi(x)=0$ for almost all $x$ from $Z=\{x\in\Omega: J(x,\varphi)=0\}$ \cite{VGR}. 

\begin{thm}
\label{CompTh} A homeomorphism $\varphi:\Omega\to\widetilde{\Omega}$
between two domains $\Omega$ and $\widetilde{\Omega}$ generate a bounded composition
operator 
\[
\varphi^{\ast}:L^1_p(\widetilde{\Omega})\to L^1_{q}(\Omega),\,\,\,1\leq q\leq p\leq\infty,
\]
 if and only if $\varphi\in W^1_{q,\loc}(\Omega)$, has finite distortion in the case $p<\infty$,
and 
\[
K_{p,q}(\varphi;\Omega) := \|K_p \mid L_{\kappa}(\Omega)\|<\infty, \,\,1/q-1/p=1/{\kappa}\,\,(\kappa=\infty, \text{ if } p=q).
\]
The norm of the operator $\varphi^\ast$ is estimated as $\|\varphi^\ast\| \leq K_{p,q}(\varphi;\Omega)$.
\end{thm}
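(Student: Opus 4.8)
The plan is to prove the two implications separately, treating sufficiency by a direct pointwise estimate combined with the change of variables formula, and necessity by the theory of the monotone set function associated with the operator.

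For sufficiency, assume $\varphi\in W^1_{q,\loc}(\Omega)$ has finite distortion and $\|K_p\mid L_\kappa(\Omega)\|<\infty$. First I would establish the norm estimate on a dense subclass, say $f\in L^1_p(\widetilde\Omega)$ that is smooth (or Lipschitz), where the chain rule gives $\nabla(f\circ\varphi)(x)=D\varphi(x)^{T}(\nabla f)(\varphi(x))$ for a.e.\ $x$ and hence $|\nabla(f\circ\varphi)(x)|\le |D\varphi(x)|\,|(\nabla f)(\varphi(x))|$. Inserting the pointwise dilatation bound $|D\varphi(x)|\le K_p(x)|J(x,\varphi)|^{1/p}$ and applying the H\"older inequality with the conjugate exponents $\kappa/q$ and $p/q$ (admissible precisely because $1/q-1/p=1/\kappa$) yields
\[
\int\limits_\Omega |\nabla(f\circ\varphi)|^q\,dx \le \left(\int\limits_\Omega K_p^\kappa\,dx\right)^{q/\kappa}\left(\int\limits_\Omega |(\nabla f)(\varphi(x))|^p\,|J(x,\varphi)|\,dx\right)^{q/p}.
\]
The change of variables formula \eqref{chvf}, applied with $E=\Omega$ to the non-negative function $|\nabla f|^p$, bounds the last integral by $\int_{\widetilde\Omega}|\nabla f(y)|^p\,dy$, giving $\|\varphi^\ast f\mid L^1_q(\Omega)\|\le K_{p,q}(\varphi;\Omega)\,\|f\mid L^1_p(\widetilde\Omega)\|$; this is exactly the asserted operator-norm bound. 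The case $p=q$ ($\kappa=\infty$) is the same computation with $\esssup K_p$ in place of H\"older.

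It then remains to pass from the dense class to an arbitrary $f\in L^1_p(\widetilde\Omega)$. Choosing smooth $f_k\to f$ in $L^1_p(\widetilde\Omega)$, the estimate above makes $\{\varphi^\ast f_k\}$ Cauchy in $L^1_q(\Omega)$, hence convergent to some $g$; the finite-distortion hypothesis, together with the facts that $L^1_p$-convergence forces convergence quasi-everywhere along a subsequence and that $\varphi$ respects sets of $p$-capacity zero, identifies $g$ with $f\circ\varphi$ quasi-everywhere, so $\varphi^\ast f\in L^1_q(\Omega)$ is well defined and obeys the same bound. For necessity, assume $\varphi^\ast\colon L^1_p(\widetilde\Omega)\to L^1_q(\Omega)$ is bounded. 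Testing on locally truncated coordinate functions $y\mapsto y_i\in L^1_{p,\loc}(\widetilde\Omega)$ shows that each coordinate of $\varphi$ lies in $L^1_{q,\loc}(\Omega)$, whence $\varphi\in W^1_{q,\loc}(\Omega)$, and for $p<\infty$ boundedness forces $D\varphi(x)=0$ a.e.\ on $\{J(x,\varphi)=0\}$, i.e.\ finite distortion. The substantive point is the integrability $K_p\in L_\kappa(\Omega)$. Here I would introduce the set function
\[
\Phi(\widetilde U)=\sup\left\{\frac{\|\varphi^\ast f\mid L^1_q(\varphi^{-1}(\widetilde U))\|^{\kappa}}{\|f\mid L^1_p(\widetilde U)\|^{\kappa}}:\ f\in L^1_p(\widetilde U),\ \nabla f\not\equiv 0\right\},
\]
defined on open $\widetilde U\subset\widetilde\Omega$, and verify that it is a monotone, countably additive set function with total mass $\Phi(\widetilde\Omega)\le\|\varphi^\ast\|^{\kappa}$.

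The main obstacle, and the technical heart of the argument, is to differentiate $\Phi$ and to identify its volume derivative with the distortion. By the differentiability of monotone countably additive set functions, the finite limit $\Phi'(y)=\lim_{r\to0}\Phi(B(y,r))/|B(y,r)|$ exists for a.e.\ $y$ and $\int_{\widetilde\Omega}\Phi'(y)\,dy\le\Phi(\widetilde\Omega)$. Evaluating $\Phi(B(y,r))$ on linear test functions concentrated near $y$ and letting $r\to0$ at Lebesgue density points of $D\varphi$ and $J(\cdot,\varphi)$, one extracts the pointwise lower bound $K_p(x)^{\kappa}\le \Phi'(\varphi(x))\,|J(x,\varphi)|$ for a.e.\ $x\in\Omega$. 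Transferring $\int\Phi'$ back to $\Omega$ through \eqref{chvf} then gives
\[
\int\limits_\Omega K_p(x)^{\kappa}\,dx \le \int\limits_\Omega \Phi'(\varphi(x))\,|J(x,\varphi)|\,dx = \int\limits_{\widetilde\Omega}\Phi'(y)\,dy \le \Phi(\widetilde\Omega)\le \|\varphi^\ast\|^{\kappa},
\]
so that $K_{p,q}(\varphi;\Omega)=\|K_p\mid L_\kappa(\Omega)\|<\infty$, completing the necessity direction. The delicate steps are the countable additivity of $\Phi$ and the local test-function computation yielding the density bound; everything else is the H\"older/change-of-variables bookkeeping already used in the sufficiency half.
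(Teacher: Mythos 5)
Your argument is essentially the standard proof of this theorem from the sources the paper itself cites (the paper gives no proof of Theorem~\ref{CompTh}, referring instead to \cite{U93,VU02,GGR95,V88,GU10}): sufficiency via the chain rule, the pointwise dilatation bound, H\"older with exponents $\kappa/q$ and $p/q$, and the change of variables formula \eqref{chvf}; necessity via the monotone (quasi-)additive set function associated with the operator, its a.e.\ differentiation, and the resulting pointwise estimate $|D\varphi(x)|^q\le\Phi'(\varphi(x))^{q/\kappa}|J(x,\varphi)|$, which is exactly the route of Vodop'yanov--Ukhlov. The outline and the exponent bookkeeping are correct; the steps you flag as delicate (quasi-additivity of $\Phi$, the fact that $\varphi^{-1}(B(y,r))$ forms an admissible differentiation basis, and the quasi-everywhere identification in the density argument) are indeed where the real work in those references lies.
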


This theorem was proved in \cite{U93,VU02} (see, also \cite{GGR95,V88} for the case $p=q$ ) and the limit case $p=\infty$ was considered in \cite{GU10}. Note, that in the case $p=\infty$ we have that
$$
K_{\infty,q}(\varphi;\Omega) :=\|\varphi\mid L^1_q(\Omega)\|.
$$

{\it Let us recall, that homeomorphisms $\varphi:\Omega\to\widetilde{\Omega}$ which satisfy conditions of Theorem~\ref{CompTh}  are called as weak $(p,q)$-quasiconformal mappings \cite{GGR95,VU98}. }

In the case of weak $(p,q)$-quasiconformal mappings, the following composition duality theorem holds \cite{U93} (the detailed proof can be found in  \cite{GU19}):

\begin{thm}
\label{CompThD} Let a homeomorphism $\varphi:\Omega\to\widetilde{\Omega}$
between two domains $\Omega$ and $\widetilde{\Omega}$ generate a bounded composition
operator 
\[
\varphi^{\ast}:L^1_p(\widetilde{\Omega})\to L^1_{q}(\Omega),\,\,\,n-1<q \leq p< \infty,
\]
then the inverse mapping $\varphi^{-1}:\widetilde{\Omega}\to\Omega$ generates a bounded composition operator 
\[
\left(\varphi^{-1}\right)^{\ast}:L^1_{q'}(\Omega)\to L^1_{p'}(\widetilde{\Omega}),
\]
where $p'=p/(p-n+1)$, $q'=q/(q-n+1)$. 
\end{thm}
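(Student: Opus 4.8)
The plan is to deduce everything from the characterization in Theorem~\ref{CompTh}, converting the duality into a pointwise identity for differentials followed by a single change of variables. First I would record what the hypothesis means: by Theorem~\ref{CompTh} the boundedness of $\varphi^{\ast}:L^1_p(\widetilde\Omega)\to L^1_q(\Omega)$ is equivalent to $\varphi\in W^1_{q,\loc}(\Omega)$ being a mapping of finite distortion with $K_p\in L_\kappa(\Omega)$, where $1/\kappa=1/q-1/p$ and $K_p(x)=|D\varphi(x)|\,|J(x,\varphi)|^{-1/p}$ on $\{J\neq 0\}$. Dually, applying the same theorem to $\varphi^{-1}$ (with source exponent $q'$ and target exponent $p'$), it suffices to prove that $\varphi^{-1}\in W^1_{p',\loc}(\widetilde\Omega)$ is a mapping of finite distortion and that its dilatation $\widetilde K_{q'}(y)=|D\varphi^{-1}(y)|\,|J(y,\varphi^{-1})|^{-1/q'}$ lies in $L_{\widetilde\kappa}(\widetilde\Omega)$, where $1/\widetilde\kappa=1/p'-1/q'$. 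A direct computation with the conjugate exponents gives $1/p'-1/q'=(n-1)(1/q-1/p)=(n-1)/\kappa$, so the target integrability exponent is exactly $\widetilde\kappa=\kappa/(n-1)$; this matching is what makes the estimate close.

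The key pointwise input is the elementary linear-algebra identity $\|A^{-1}\|=\|\adj A\|\,|\det A|^{-1}\leq\|A\|^{n-1}|\det A|^{-1}$, valid for any invertible $A\in GL(n,\mathbb R)$ and read off from the singular values. Applied to $A=D\varphi(x)$ at a point where $J(x,\varphi)\neq 0$, with $y=\varphi(x)$, together with the inverse-function relations $D\varphi^{-1}(y)=(D\varphi(x))^{-1}$ and $J(y,\varphi^{-1})=J(x,\varphi)^{-1}$, this yields
$$
|D\varphi^{-1}(y)|\leq\frac{|D\varphi(x)|^{n-1}}{|J(x,\varphi)|}.
$$
Substituting into the definition of $\widetilde K_{q'}$ and writing $|D\varphi(x)|=K_p(x)\,|J(x,\varphi)|^{1/p}$, the Jacobian exponents combine, using $1/q'-1=-(n-1)/q$ and $(n-1)/p-(n-1)/q=-(n-1)/\kappa$, into
$$
\widetilde K_{q'}(\varphi(x))\leq K_p(x)^{n-1}\,|J(x,\varphi)|^{-(n-1)/\kappa}.
$$

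To finish, I would raise this inequality to the power $\widetilde\kappa=\kappa/(n-1)$, integrate over $\widetilde\Omega$, and change variables by $y=\varphi(x)$, $dy=|J(x,\varphi)|\,dx$. Since $(n-1)\widetilde\kappa=\kappa$ and $(n-1)\widetilde\kappa/\kappa=1$, the weight $|J(x,\varphi)|^{-1}$ produced by the estimate is cancelled exactly by the Jacobian of the change of variables, leaving
$$
\int\limits_{\widetilde\Omega}\widetilde K_{q'}(y)^{\widetilde\kappa}\,dy\leq\int\limits_{\Omega}K_p(x)^{\kappa}\,dx=K_{p,q}(\varphi;\Omega)^{\kappa}<\infty .
$$
Hence $\|\widetilde K_{q'}\mid L_{\widetilde\kappa}(\widetilde\Omega)\|\leq K_{p,q}(\varphi;\Omega)^{n-1}$, and Theorem~\ref{CompTh} delivers the bounded operator $(\varphi^{-1})^{\ast}:L^1_{q'}(\Omega)\to L^1_{p'}(\widetilde\Omega)$ with the norm bound $\|(\varphi^{-1})^{\ast}\|\leq K_{p,q}(\varphi;\Omega)^{n-1}$.

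I expect the main obstacle to be not this computation but the regularity of the inverse used implicitly above: that $\varphi^{-1}$ is weakly differentiable (indeed in $W^1_{p',\loc}$), has finite distortion, and satisfies the a.e.\ identities $D\varphi^{-1}(\varphi(x))=(D\varphi(x))^{-1}$ and $J(\varphi(x),\varphi^{-1})=J(x,\varphi)^{-1}$. This is precisely where the restriction $q>n-1$ is needed, and it also underlies the legitimacy of the change of variables through the Luzin $N$ and $N^{-1}$ properties of $\varphi$ and $\varphi^{-1}$ on the set where the Jacobian is non-zero. I would isolate this as the technical heart, invoking the structure theory of Sobolev homeomorphisms of finite distortion, and keep the measure-zero exceptional set $\{J(x,\varphi)=0\}$, controlled by finite distortion, from contributing to either integral.
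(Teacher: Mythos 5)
The paper does not prove Theorem~\ref{CompThD}; it is quoted from \cite{U93}, with the detailed proof referenced to \cite{GU19}. Your argument is exactly the standard one used there: the adjugate bound $|D\varphi^{-1}(y)|\leq |D\varphi(x)|^{n-1}|J(x,\varphi)|^{-1}$, the exponent bookkeeping $1/p'-1/q'=(n-1)(1/q-1/p)$, and a change of variables that cancels the Jacobian weight, with the regularity of $\varphi^{-1}$ (via the $W^{1,n-1}$ structure theory of \cite{CHM10,GU10}, which is where $q>n-1$ enters) correctly flagged as the technical core. The computation checks out; the only points you leave implicit are the limiting case $p=q$ (where $\kappa=\widetilde\kappa=\infty$ and the pointwise inequality alone gives the $L_\infty$ bound) and the verification that $\widetilde K_{q'}$ vanishes a.e.\ on the images of the exceptional sets $S$ and $Z$, which follows from finite distortion of $\varphi^{-1}$.
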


In the case $\Omega, \widetilde{\Omega}\subset\mathbb R^2$ the following generalized statement of Theorem~\ref{CompThD} \cite{U93} is correct:

\begin{thm}
\label{CompThDP} Let a homeomorphism $\varphi:\Omega\to\widetilde{\Omega}$
between two domains $\Omega, \widetilde{\Omega}\subset\mathbb R^2$ generates a bounded composition
operator 
\[
\varphi^{\ast}:L^1_p(\widetilde{\Omega})\to L^1_{q}(\Omega),\,\,\,1\leq q \leq p< \infty,
\]
then the inverse mapping $\varphi^{-1}:\widetilde{\Omega}\to\Omega$ generates a bounded composition operator 
\[
\left(\varphi^{-1}\right)^{\ast}:L^1_{q'}(\Omega)\to L^1_{p'}(\widetilde{\Omega}),\,\,\frac{1}{p}+\frac{1}{p'}=1, \frac{1}{q}+\frac{1}{q'}=1.
\]
In the case $1< q \leq p< \infty$ the inverse assertion is correct also and $p''=p$, $q''=q$.
\end{thm}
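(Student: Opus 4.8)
The plan is to deduce the statement from the analytic characterization of composition operators in Theorem~\ref{CompTh} applied to the inverse homeomorphism $\psi=\varphi^{-1}$, the whole argument resting on the elementary two-dimensional linear-algebra identity
$$
|A^{-1}|=\frac{|A|}{|\det A|},\qquad A\in GL(2,\mathbb R),
$$
which holds because in the plane the adjugate $\adj A$ has the same operator norm as $A$ itself (the two singular values of $A$ are merely interchanged under $A\mapsto\adj A$). This identity is precisely what fails for $n\geq 3$ and explains why Theorem~\ref{CompThD} must exclude $q\le n-1$, whereas here the full range $1\le q\le p$ survives.

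First I would record that, by Theorem~\ref{CompTh}, boundedness of $\varphi^{\ast}:L^1_p(\widetilde\Omega)\to L^1_q(\Omega)$ means $\varphi\in W^1_{q,\loc}(\Omega)$ is a mapping of finite distortion with $\|K_p(\cdot,\varphi)\mid L_\kappa(\Omega)\|<\infty$, where $1/\kappa=1/q-1/p$. Differentiating $\psi\circ\varphi=\mathrm{id}$ almost everywhere gives $D\psi(y)=\bigl(D\varphi(x)\bigr)^{-1}$ and $J(y,\psi)=1/J(x,\varphi)$ at $y=\varphi(x)$, so the planar identity yields $|D\psi(y)|=|D\varphi(x)|/|J(x,\varphi)|$. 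A short computation with the conjugate exponents then shows that the $q'$-dilatation of $\psi$ transforms into the $q$-dilatation of $\varphi$,
$$
K_{q'}(y,\psi)=\frac{|D\psi(y)|}{|J(y,\psi)|^{1/q'}}=|D\varphi(x)|\,|J(x,\varphi)|^{1/q'-1}=\frac{|D\varphi(x)|}{|J(x,\varphi)|^{1/q}}=K_q(x),
$$
using $1-1/q'=1/q$. The exponent $\lambda$ prescribed by Theorem~\ref{CompTh} for $\psi^{\ast}$ satisfies $1/\lambda=1/p'-1/q'=1/q-1/p=1/\kappa$, so $\lambda=\kappa$, and the change of variables formula \eqref{chvf} gives
$$
\int_{\widetilde\Omega}K_{q'}(y,\psi)^{\kappa}\,dy=\int_{\Omega}K_q(x)^{\kappa}\,|J(x,\varphi)|\,dx=\int_{\Omega}|D\varphi(x)|^{\kappa}\,|J(x,\varphi)|^{1-\kappa/q}\,dx=\int_{\Omega}K_p(x)^{\kappa}\,dx,
$$
the last equality being the arithmetic identity $\kappa/q-\kappa/p=1$. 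Hence $\|K_{q'}(\cdot,\psi)\mid L_\kappa(\widetilde\Omega)\|=K_{p,q}(\varphi;\Omega)<\infty$, and Theorem~\ref{CompTh}, read in the borderline form $K_{\infty,p'}(\psi;\widetilde\Omega)=\|\psi\mid L^1_{p'}(\widetilde\Omega)\|$ when $q=1$ (so $q'=\infty$), yields the boundedness of $\psi^{\ast}:L^1_{q'}(\Omega)\to L^1_{p'}(\widetilde\Omega)$.

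The step that makes this rigorous, and which I expect to be the main obstacle, is that the pointwise computation above is meaningful only after one knows that $\psi=\varphi^{-1}$ is itself weakly differentiable, of finite distortion, and that $\varphi$ and $\psi$ satisfy the Luzin $N$- and $N^{-1}$-properties needed to run \eqref{chvf} in both directions and to identify $D\psi(\varphi(x))$ with $\bigl(D\varphi(x)\bigr)^{-1}$ almost everywhere. This is exactly where planarity is essential and where the case $q=1$ goes beyond Theorem~\ref{CompThD}: in the plane a $W^1_{1,\loc}$-homeomorphism of finite distortion has a weakly differentiable inverse with $D\varphi^{-1}(y)=\bigl(D\varphi(\varphi^{-1}(y))\bigr)^{-1}$ a.e. together with the associated absolute continuity on lines (see \cite{HK14}), which supplies the regularity of $\psi$ without the restriction $q>n-1$. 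I would establish these facts first and only then invoke the dilatation identity.

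Finally, for the converse when $1<q\le p<\infty$, I would apply the forward implication to $\psi=\varphi^{-1}$: its associated exponents are $q'$ and $p'$ with $1<p'\le q'<\infty$, so the hypotheses are again met and the forward direction produces a bounded operator $(\psi^{-1})^{\ast}=\varphi^{\ast}:L^1_{(p')'}(\widetilde\Omega)\to L^1_{(q')'}(\Omega)$. Since double conjugation gives $(p')'=p$ and $(q')'=q$, this is the original operator, establishing the equivalence with $p''=p$ and $q''=q$. The restriction $q>1$ is unavoidable at this stage because applying the forward implication to $\psi$ requires its source exponent $q'$ to be finite, that is, $q>1$.
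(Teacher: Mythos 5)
Your argument is essentially correct, but note that the paper itself offers no proof of Theorem~\ref{CompThDP}: it is quoted from \cite{U93}, so there is no in-text argument to compare against. What you have reconstructed is the standard duality proof, and you have correctly isolated the two ingredients that make the planar case work in the full range $1\le q\le p$: (i) the identity $|A^{-1}|=|A|/|\det A|$ for $A\in GL(2,\mathbb R)$ (equivalently, $\adj A$ has the same operator norm as $A$), which converts $K_{q'}(y,\varphi^{-1})$ into $K_q(x,\varphi)$ and makes the exponent bookkeeping $1/p'-1/q'=1/q-1/p$ close up exactly; and (ii) the Hencl--Koskela regularity theorem for inverses of planar $W^1_{1,\loc}$-homeomorphisms of finite distortion, which replaces the $q>n-1$ hypothesis of Theorem~\ref{CompThD}. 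Your reduction of the converse (for $q>1$) to the forward implication applied to $\varphi^{-1}$ is also the intended mechanism and explains why $q=1$ must be excluded there.

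Two small points deserve explicit care if you write this up. First, the change-of-variables step $\int_{\widetilde\Omega}K_{q'}(y,\varphi^{-1})^{\kappa}\,dy=\int_{\Omega}K_q(x,\varphi)^{\kappa}|J(x,\varphi)|\,dx$ uses formula \eqref{chvf}, whose right-hand side omits $\varphi(S\cup Z)$; since $\varphi$ need not satisfy the Luzin $N$-property when $q$ is small, $\varphi(S)$ may have positive measure. You must therefore check that $K_{q'}(\cdot,\varphi^{-1})$ vanishes a.e.\ on $\varphi(S\cup Z)$; this follows because $\varphi^{-1}$ maps that set into a null set, so $J(\cdot,\varphi^{-1})=0$ a.e.\ there by the area inequality, and then $D\varphi^{-1}=0$ a.e.\ there by the finite distortion of $\varphi^{-1}$. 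Second, the pointwise identity $D\varphi^{-1}(\varphi(x))=(D\varphi(x))^{-1}$ should only be asserted for a.e.\ $x\in\Omega\setminus(S\cup Z)$; on the complement the previous remark takes over. With these qualifications, which you have partly anticipated, the proof is complete.
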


Note that in the case $q=1$ the inverse assertion will be correct with additional assumptions of finite distortion and Luzin $N$-property. 

\section{Composition operators and $Q$-homeomorphisms}

\subsection{Modulus and capacity}

The theory of $Q$-quasiconformal mappings has been extensively developed in recent decades, see, for example,  \cite{MRSY09}. Let us give the  basic definitions.

The linear integral is denoted by
$$
\int\limits_{\gamma}\rho~ds=\sup\int\limits_{\gamma'}\rho~ds=\sup\int\limits_0^{l(\gamma')}\rho(\gamma'(s))~ds
$$
where the supremum is taken over all closed parts $\gamma'$ of $\gamma$ and $l(\gamma')$ is the length of $\gamma'$. Let $\Gamma$ be a family of curves in $\mathbb R^n$. Denote by $adm(\Gamma)$ the set of Borel functions (admissible functions)
$\rho: \mathbb R^n\to[0,\infty]$ such that the inequality
$$
\int\limits_{\gamma}\rho~ds\geqslant 1
$$
holds for locally rectifiable curves $\gamma\in\Gamma$.

Let $\Gamma$ be a family of curves in $\overline{\mathbb R^n}$, where $\overline{\mathbb R^n}$ is a one point compactification of the Euclidean space $\mathbb R^n$. The quantity
$$
M(\Gamma)=\inf\int\limits_{\mathbb R^n}\rho^{n}~dx
$$
is called the (conformal) module of the family of curves $\Gamma$ \cite{MRSY09}. The infimum is taken over all admissible functions
$\rho\in adm(\Gamma)$.

Let $\Omega$ be a bounded domain in $\mathbb R^n$ and $F_0, F_1$ be disjoint non-empty compact sets in the
closure of $\Omega$. Let $M(\Gamma(F_0,F_1;\Omega))$ stand for the
module of a family of curves which connect $F_0$ and $F_1$ in $\Omega$. Then \cite{MRSY09}
\begin{equation}\label{eq2}
M(\Gamma(F_0,F_1;\Omega)) = \cp_{n}(F_0,F_1;\Omega)\,,
\end{equation}
where $\cp_{n}(F_0,F_1;\Omega)$ is a conformal capacity of the condenser $(F_0,F_1;\Omega)$ \cite{M}.

Recall that a homeomorphism $\varphi: \Omega\to\widetilde{\Omega}$ of domains
$\Omega,\widetilde{\Omega}\subset \mathbb R^n$ is called a $Q$-homeomorphism \cite{MRSY09} with a non-negative measurable function $Q$ if
$$
M\left(\varphi \Gamma\right)\leqslant \int\limits_{\Omega} Q(x)\cdot
\rho^{n}(x)dx
$$
for every family $\Gamma$ of rectifiable paths in $\Omega$ and every admissible function $\rho$ for $\Gamma$.

\subsection{Mappings of finite distortion}

Let us define two distortion functions for Sobolev mappings of finite distortion $\varphi: \Omega \to \widetilde\Omega$.

\noindent
The outer  (quasi)conformal dilatation
$$
K^O(x,\varphi)=
\begin{cases}
\frac{|D\varphi(x)|^n}{|J(x,\varphi)|},& \,\, J(x,\varphi)\ne 0,\\
0,& \,\, J(x,\varphi)= 0.
\end{cases}
$$
The inner (quasi)conformal dilatation
$$
K^I(x,\varphi)=
\begin{cases}
\frac{|J(x,\varphi)|}{l(D\varphi(x))^n},& \,\, J(x,\varphi)\ne 0,\\
0,& \,\, J(x,\varphi)= 0,
\end{cases}
$$
where $l(D\varphi(x)) = \min\{ |D\varphi(x)h| : h \in \mathbb{R}^n, |h|=1 \}$.

Note that $K^I(x) \leq (K^O(x))^{n-1}$ and $K^O(x) \leq (K^I(x))^{n-1}$. 

In this section we consider $Q$-homeomorphisms with $Q \in L_s(\Omega)$, $s\geq 1$. 
In \cite{MRSY09}, it was proven that $Q$-homeomorphisms $\varphi: \Omega \to \widetilde\Omega$ with $Q \in L_{1, \loc}(\Omega)$ belong to the Sobolev space $W^1_{1,\loc}(\Omega)$, differentiable a.e., have a finite distortion  and possess the Luzin-$N^{-1}$ property. 

\begin{thm}\label{Q-comp}
Let $\varphi: \Omega \to \widetilde{\Omega}$ be a $Q$-homeomorphism with $Q \in L_s(\Omega)$. Then $\varphi$ generates the bounded composition operator 
$$
\varphi^{\ast}: L^1_n(\widetilde\Omega) \to L^1_{q}(\Omega),
$$
where $q={sn}/{(s+n-1)}$.
In particular, if $Q \in L_1(\Omega)$, then $\varphi$ generates the bounded composition operator 
$$
\varphi^{\ast}: L^1_n(\widetilde\Omega) \to L^1_1(\Omega).
$$
\end{thm}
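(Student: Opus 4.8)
The plan is to deduce the result from the analytic characterization of composition operators in Theorem~\ref{CompTh}, applied with $p=n$. With $p=n$ and $q=sn/(s+n-1)$ one computes
$$
\frac1q-\frac1n=\frac{s+n-1}{sn}-\frac1n=\frac{n-1}{sn},
$$
so the relevant integrability exponent is $\kappa=sn/(n-1)$, and the whole problem reduces to showing that the $n$-dilatation $K_n$ lies in $L_\kappa(\Omega)$ with norm controlled by $\|Q\mid L_s(\Omega)\|$, together with the regularity requirements $\varphi\in W^1_{q,\loc}(\Omega)$ and finite distortion. Note that $1\le q\le n$ for every $s\ge1$, so this choice of exponents is admissible in Theorem~\ref{CompTh}.

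First I would record the regularity that comes for free: by the result of \cite{MRSY09} quoted above, a $Q$-homeomorphism with $Q\in L_{1,\loc}(\Omega)$ (which holds since $L_s(\Omega)\subset L_{1,\loc}(\Omega)$) belongs to $W^1_{1,\loc}(\Omega)$, is differentiable a.e., has finite distortion and enjoys the Luzin $N^{-1}$-property. The decisive step, and the one I expect to be the main obstacle, is the pointwise passage from the modular definition of a $Q$-homeomorphism to a bound on the dilatation, namely
$$
K^I(x,\varphi)\le Q(x)\quad\text{for a.e. }x\in\Omega.
$$
This is precisely the minimality of the inner dilatation as an admissible weight in the modulus inequality $M(\varphi\Gamma)\le\int_\Omega Q\,\rho^n\,dx$: infinitesimally, at a point of differentiability with $J(x,\varphi)\ne0$ one tests the inequality on path families passing through $x$ and compares with the transformation rule of the modulus under the linear map $D\varphi(x)$, which is governed by $K^I$; globally this is encoded in the condenser identity \eqref{eq2}. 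I would invoke this bound directly from \cite{MRSY09}.

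Granting $K^I\le Q$, the remaining estimates are routine. Since $K_n(x)=|D\varphi(x)|/|J(x,\varphi)|^{1/n}$ satisfies $K_n^n=K^O$, and since $K^O(x)\le (K^I(x))^{n-1}$, I obtain
$$
K_n(x)^\kappa=\bigl(K^O(x)\bigr)^{s/(n-1)}\le\bigl(K^I(x)\bigr)^{s}\le Q(x)^s,
$$
so that $\|K_n\mid L_\kappa(\Omega)\|^\kappa\le\|Q\mid L_s(\Omega)\|^s<\infty$. The membership $\varphi\in W^1_{q,\loc}(\Omega)$ then follows by writing $|D\varphi|^q=K_n^q\,|J(\cdot,\varphi)|^{q/n}$ and applying H\"older's inequality with exponents $\kappa/q$ and $\kappa/(\kappa-q)$: the first factor is finite by the bound just obtained, while the second reduces to $|J(\cdot,\varphi)|\in L_{1,\loc}(\Omega)$, which holds since $\int_{\Omega'}|J(x,\varphi)|\,dx\le|\varphi(\Omega')|<\infty$ for every $\Omega'\Subset\Omega$. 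With finite distortion, $W^1_{q,\loc}$-regularity and $K_{n,q}(\varphi;\Omega)=\|K_n\mid L_\kappa(\Omega)\|<\infty$ all in hand, Theorem~\ref{CompTh} yields the bounded operator $\varphi^{\ast}:L^1_n(\widetilde\Omega)\to L^1_q(\Omega)$. Finally, the special case $Q\in L_1(\Omega)$ is just $s=1$, for which $q=1$ and $\kappa=n/(n-1)$, giving the bounded operator $\varphi^{\ast}:L^1_n(\widetilde\Omega)\to L^1_1(\Omega)$.
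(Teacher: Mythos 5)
Your argument is correct and essentially coincides with the paper's proof: both reduce the statement to Theorem~\ref{CompTh} with $p=n$ and $\kappa=sn/(n-1)$ via a pointwise bound on the outer dilatation by a power of $Q$ imported from \cite{MRSY09} (the paper uses the inequality $|D\varphi(x)|\le C(n)|J(x,\varphi)|^{1/n}Q^{(n-1)/n}(x)$, which is equivalent to your $K^I\le Q$ combined with $K^O\le (K^I)^{n-1}$), and then integrates the resulting estimate $(K^O)^{s/(n-1)}\le Q^s$. Your explicit H\"older-inequality verification that $\varphi\in W^1_{q,\loc}(\Omega)$ is a detail the paper leaves implicit, but it does not change the method.
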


\begin{proof}
Because $\varphi$ is a $Q$-homeomorphism with $Q \in L_s(\Omega)$, then $Q\in L_{1,\loc}(\Omega)$. Hence 
$\varphi$ belongs to the Sobolev space $W^1_{1,\loc}(\Omega)$ and has a finite distortion \cite{MRSY09}.

In \cite{MRSY09} it was proved that for $Q\in L_{1,\loc}(\Omega)$ the inequality 
\begin{equation}
\label{Q-dist}
|D\varphi(x)| \leq C(n) |J(x,\varphi)|^{\frac{1}{n}}Q^{\frac{n-1}{n}}(x)
\end{equation}
holds for almost all $x\in\Omega$. Hence
$$
\left(\frac{|D\varphi(x)|^n}{|J(x,\varphi)|}\right)^{\frac{1}{n-1}}\leq Q(x)\,\,\text{for almost all}\,\,x\in\Omega.
$$

Since $Q \in L_s(\Omega)$, we have 
$$
\int\limits_{\Omega}\left(\frac{|D\varphi(x)|^n}{|J(x,\varphi)|}\right)^{\frac{s}{n-1}}~dx\leq \int\limits_{\Omega}Q^s(x)~dx<\infty.
$$
Then, by Theorem~\ref{CompTh} the mapping $\varphi$ generates the bounded composition operator $\varphi^{\ast}: L^1_n(\widetilde\Omega) \to L^1_{q}(\Omega)$, $q={sn}/{(s+n-1)}$.
\end{proof}

Using the composition duality property in the case of planar domains $\Omega,\widetilde{\Omega}\subset\mathbb R^2$ we obtain:

\begin{thm}
Let $\varphi: \Omega \to \widetilde{\Omega}$ where $\Omega,\widetilde{\Omega}\subset\mathbb R^2$, be a $Q$-homeomorphism with $Q \in L_1(\Omega)$. Then the inverse mapping $\varphi^{-1}:\widetilde{\Omega}\to \Omega$ generates the bounded composition operator 
$$
\left(\varphi^{-1}\right)^{\ast}: L^1_{\infty}(\Omega) \to L^1_{2}(\widetilde\Omega).
$$
In particular, $\varphi^{-1}\in L^1_2(\widetilde\Omega)$.
\end{thm}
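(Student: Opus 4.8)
The plan is to apply the composition duality theorem (Theorem~\ref{CompThDP}) to the composition operator produced by the previous result. First I would invoke Theorem~\ref{Q-comp} in the planar case $n=2$: since $\varphi$ is a $Q$-homeomorphism with $Q\in L_1(\Omega)$, it generates the bounded composition operator
\[
\varphi^{\ast}: L^1_2(\widetilde{\Omega})\to L^1_1(\Omega),
\]
so here $p=2$ and $q=1$. This is exactly the setting covered by Theorem~\ref{CompThDP}, whose hypothesis is $1\leq q\leq p<\infty$ with $\Omega,\widetilde{\Omega}\subset\mathbb R^2$.

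Next I would read off the dual exponents. The duality theorem asserts that $\varphi^{-1}$ generates a bounded operator $(\varphi^{-1})^{\ast}: L^1_{q'}(\Omega)\to L^1_{p'}(\widetilde{\Omega})$ with $1/p+1/p'=1$ and $1/q+1/q'=1$. With $p=2$ we get $p'=2$, and with $q=1$ we get $q'=\infty$ (the conjugate of $1$). Substituting yields precisely
\[
\left(\varphi^{-1}\right)^{\ast}: L^1_{\infty}(\Omega)\to L^1_2(\widetilde{\Omega}),
\]
which is the claimed boundedness statement.

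For the final assertion $\varphi^{-1}\in L^1_2(\widetilde{\Omega})$, I would use the characterization of the $p=\infty$ source space noted after Theorem~\ref{CompTh}, namely that for an $L^1_\infty$-to-$L^1_2$ operator the relevant dilatation reduces to $K_{\infty,2}(\varphi^{-1};\widetilde{\Omega})=\|\varphi^{-1}\mid L^1_2(\widetilde{\Omega})\|$. Boundedness of the operator $(\varphi^{-1})^{\ast}$ is equivalent to the finiteness of this quantity, hence $\|\varphi^{-1}\mid L^1_2(\widetilde{\Omega})\|<\infty$, i.e.\ the coordinate functions of $\varphi^{-1}$ lie in $L^1_2(\widetilde{\Omega})$. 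Equivalently, one takes the coordinate functions $y_1,y_2$ as test functions in $L^1_\infty(\Omega)$ and observes that their compositions with $\varphi^{-1}$ recover $\varphi^{-1}$ itself.

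The one point requiring care is the boundary case $q=1$, since Theorem~\ref{CompThDP} explicitly states its clean inverse/reflexivity conclusion only for $1<q\leq p<\infty$, while the remark immediately following it flags that $q=1$ needs the extra hypotheses of finite distortion and the Luzin $N$-property. However, the forward duality direction we use here (from $\varphi^{\ast}$ to $(\varphi^{-1})^{\ast}$) is valid for the full range $1\leq q\leq p<\infty$, and moreover the underlying $Q$-homeomorphism already has finite distortion and the Luzin $N^{-1}$-property by the cited results from \cite{MRSY09}, so no additional regularity needs to be verified. Thus the main obstacle is merely bookkeeping of the degenerate conjugate exponent $q'=\infty$ and confirming that the hypotheses of the duality theorem are met; the argument is otherwise a direct application of the two preceding theorems.
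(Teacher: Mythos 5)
Your proposal is correct and follows essentially the same route as the paper: invoke Theorem~\ref{Q-comp} to get the bounded operator $\varphi^{\ast}: L^1_2(\widetilde{\Omega})\to L^1_1(\Omega)$, then apply the planar duality Theorem~\ref{CompThDP} with $p=2$, $q=1$ to obtain $(\varphi^{-1})^{\ast}: L^1_{\infty}(\Omega)\to L^1_2(\widetilde{\Omega})$, and conclude $\varphi^{-1}\in L^1_2(\widetilde{\Omega})$ from the $p=\infty$ characterization (the paper cites \cite{GU10} for this last step). Your extra care about the $q=1$ boundary case is a welcome addition, and you correctly observe that only the forward duality direction is needed, which holds on the full range $1\leq q\leq p<\infty$.
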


\begin{proof}
Since $\varphi: \Omega \to \widetilde{\Omega}$ is a $Q$-homeomorphism with $Q \in L_1(\Omega)$, then by Theorem~\ref{Q-comp} the mapping $\varphi$ generates the bounded composition operator 
$$
\varphi^{\ast}: L^1_2(\widetilde{\Omega})\to L^1_1(\Omega).
$$
Hence by Theorem~\ref{CompThDP} the inverse mapping $\varphi^{-1}:\widetilde{\Omega}\to \Omega$ generates the bounded composition operator 
$$
\left(\varphi^{-1}\right)^{\ast}: L^1_{\infty}(\Omega) \to L^1_{2}(\widetilde\Omega).
$$
In particular, by \cite{GU10}, $\varphi^{-1}\in L^1_2(\widetilde\Omega)$.
\end{proof}

Now we prove the inverse property. 

\begin{thm}\label{comp-Q}
Let a homeomorphism $\varphi: \Omega \to \widetilde{\Omega}$, $\Omega$ and $\widetilde{\Omega}$ are domains in $\mathbb R^n$, $n\geq 3$, satisfy Luzin $N$-property and generate the bounded composition operator 
$$
\varphi^{\ast}: L^1_n(\widetilde\Omega) \to L^1_{n-1}(\Omega).
$$
Then $\varphi$ is a $Q$-homeomorphism with $Q(x)=K^I(x,\varphi)\in L_1(\Omega)$.
\end{thm}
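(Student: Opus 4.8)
The plan is to verify directly the defining inequality of a $Q$-homeomorphism,
$$
M(\varphi\Gamma)\leq\int\limits_{\Omega}K^I(x,\varphi)\cdot\rho^n(x)\,dx,
$$
for every path family $\Gamma$ in $\Omega$ and every $\rho\in adm(\Gamma)$, taking $Q(x)=K^I(x,\varphi)$. The first step is to confirm that $Q\in L_1(\Omega)$: since $\varphi$ generates a bounded operator $\varphi^{\ast}:L^1_n(\widetilde\Omega)\to L^1_{n-1}(\Omega)$, Theorem~\ref{CompTh} with $p=n$, $q=n-1$ gives $\kappa=\frac{pq}{p-q}=n(n-1)$, so $K_n\in L_{n(n-1)}(\Omega)$. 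Using the pointwise bound $K^I(x)\leq (K^O(x))^{n-1}=(K_n(x)^n)^{n-1/n}$—more precisely $K^I(x)\leq K^O(x)^{n-1}$ and $K^O(x)=K_n(x)^n$—a short computation of exponents should yield $K^I\in L_1(\Omega)$, which is the integrability claim.

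Next I would establish the modulus inequality itself. The standard route, following the theory in \cite{MRSY09}, is to start from an admissible $\rho$ for $\Gamma$ and push it forward: define $\widetilde\rho$ on $\widetilde\Omega$ by transporting $\rho$ along $\varphi^{-1}$ and show it is admissible (or nearly so) for $\varphi\Gamma$, then estimate $\int_{\widetilde\Omega}\widetilde\rho^n\,dy$ by a change of variables back to $\Omega$. The inner dilatation $K^I$ is exactly the distortion function that controls how lengths of curves and the $n$-dimensional integral transform under this pushforward, because $K^I$ compares the Jacobian to the $n$-th power of the minimal stretching $l(D\varphi)$. Concretely, for a curve $\gamma$ and its image $\widetilde\gamma=\varphi\circ\gamma$, the relation $ds_{\widetilde\gamma}\geq l(D\varphi)\,ds_\gamma$ (valid along a.e.\ curve by the ACL property and differentiability guaranteed since $\varphi\in W^1_{n-1,\loc}$ has finite distortion) lets one transfer admissibility, and then the change of variables formula \eqref{chvf}, together with the Luzin $N$-property assumed in the hypothesis to guarantee $|\varphi(S)|=0$, converts the integral of $\widetilde\rho^n$ over $\widetilde\Omega$ into $\int_\Omega K^I(x,\varphi)\rho^n(x)\,dx$. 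Taking the infimum over admissible $\rho$ then produces the required bound on $M(\varphi\Gamma)$.

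The main obstacle I expect is the careful justification that the pushforward function is admissible for $\varphi\Gamma$ on \emph{almost every} curve, and more delicately, controlling the exceptional family of curves on which $\varphi$ fails to be absolutely continuous or differentiable. In dimension $n\geq 3$ with only $\varphi\in W^1_{n-1,\loc}$, one does not automatically have the full regularity available in the quasiconformal case, so the argument must rely on the Fuglede-type lemma: the family of curves along which an $L^{n-1}$ (or $L_{\loc}^1$ after raising to the appropriate power) function fails to be integrable, or along which absolute continuity fails, has $n$-modulus zero and hence may be discarded without affecting $M(\varphi\Gamma)$. Verifying that the relevant gradient bound $|D\varphi|\in L_{n-1,\loc}$ places the exceptional set into a modulus-zero family—and that the finite-distortion hypothesis kills the contribution of the critical set $Z=\{J(x,\varphi)=0\}$—is the technical heart of the proof. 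Once the exceptional curves are excluded and the Luzin $N$-property supplies the change-of-variables identity, the estimate follows, and the two facts together give that $\varphi$ is a $Q$-homeomorphism with $Q=K^I(\cdot,\varphi)\in L_1(\Omega)$.
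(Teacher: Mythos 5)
Your first half (the integrability claim) is correct and matches the paper: with $p=n$, $q=n-1$ Theorem~\ref{CompTh} gives $K_n\in L_{n(n-1)}(\Omega)$, hence $\int_\Omega K^I\,dx\leq\int_\Omega (K^O)^{n-1}\,dx=\int_\Omega\bigl(|D\varphi|^n/|J(\cdot,\varphi)|\bigr)^{n-1}dx<\infty$. The computational core of the second half (push $\rho$ forward to $\widetilde\rho(y)=\rho(\varphi^{-1}(y))|D\varphi^{-1}(y)|$, then change variables using the Luzin $N$-property and the identity $|D\varphi^{-1}(y)|^n J(\varphi^{-1}(y),\varphi)=K^I(\varphi^{-1}(y),\varphi)$ up to the volume derivative) is also the paper's computation.

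However, at the step you yourself flag as the ``technical heart'' there is a genuine gap. You propose to control the exceptional curves via a Fuglede-type lemma applied to $\varphi$ itself, using only $\varphi\in W^1_{n-1,\loc}(\Omega)$, and you assert that the bad family ``has $n$-modulus zero and hence may be discarded without affecting $M(\varphi\Gamma)$.'' This does not follow: Fuglede's theorem for a $W^1_{n-1,\loc}$ mapping yields an exceptional family of $(n-1)$-modulus zero \emph{in $\Omega$}, which is weaker than $n$-modulus zero, and in any case what must be discarded without loss is a subfamily of the \emph{image} family $\varphi\Gamma$, whose modulus is the conformal $n$-modulus computed in $\widetilde\Omega$. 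Vanishing of a modulus of a family in $\Omega$ gives no direct control on $M(\varphi\Gamma_0)$ in $\widetilde\Omega$. The paper's resolution is to transfer all the regularity to the inverse map: since $\varphi\in W^1_{n-1,\loc}(\Omega)$ has finite distortion, by \cite[Theorem B]{GU10} (or \cite[Theorem 1.2]{CHM10}) $\varphi^{-1}\in W^1_{1,\loc}(\widetilde\Omega)$ with finite distortion, and the change of variables plus the already-established bound gives
$$
\int\limits_{\widetilde\Omega}|D\varphi^{-1}(y)|^n\,dy=\int\limits_{\Omega}\frac{|J(x,\varphi)|}{l(D\varphi(x))^n}\,dx=\int\limits_{\Omega}K^I(x,\varphi)\,dx<\infty,
$$
so $\varphi^{-1}\in \ACL^n(\widetilde\Omega)$. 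Fuglede's theorem is then applied to $\varphi^{-1}$ \emph{at the exponent $n$ and in the domain $\widetilde\Omega$}, which is exactly what legitimizes $M(\varphi\Gamma)=M(\widetilde\Gamma)$ for the subfamily $\widetilde\Gamma$ of curves along which $\varphi^{-1}$ is absolutely continuous, and makes $\widetilde\rho$ admissible for $\widetilde\Gamma$. Without this passage to the inverse mapping your argument does not close; with it, the rest of your outline goes through as written.
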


\begin{proof}  
Since $\varphi$ generates the bounded composition operator 
$$
\varphi^{\ast}: L^1_n(\widetilde\Omega) \to L^1_{n-1}(\Omega),
$$
then by Theorem \ref{CompTh}
$$
\int\limits_{\Omega}\left(\frac{|D\varphi(x)|^n}{|J(x,\varphi)|}\right)^{n-1}~dx<\infty.
$$
Hence
$$
\int\limits_{\Omega}K^I(x,\varphi)~dx\leq  
\int\limits_{\Omega}\left(K^O(x,\varphi)\right)^{n-1}~dx= \int\limits_{\Omega}\left(\frac{|D\varphi(x)|^n}{|J(x,\varphi)|}\right)^{n-1}~dx<\infty.
$$

Now by the definition of the $Q$-homeomorphism, we have to show, that for every family $\Gamma$ of paths in $\Omega$ and every $\rho \in adm(\Gamma)$
$$
M(\varphi\Gamma) \leq \int\limits_\Omega K^I(x,\varphi) \rho^n(x) \, dx.
$$

Because $\varphi$ generates the bounded composition operator 
$$
\varphi^{\ast}: L^1_n(\widetilde\Omega) \to L^1_{n-1}(\Omega),
$$
then by Theorem \ref{CompTh}, $\varphi\in W^1_{n-1,\loc}(\Omega)$ and has a finite distortion. Hence by \cite[Theorem B]{GU10} (see, also  \cite[Theorem 1.2]{CHM10}) we obtain that $\varphi^{-1} \in W^1_{1,\loc}(\widetilde\Omega)$ and has a finite distortion.
Now by the change of variables formula 
$$
\int\limits_{\widetilde\Omega}|D\varphi^{-1}(y)|^n~dy=\int\limits_{\Omega}|D\varphi^{-1}(\varphi(x))|^n|J(x,\varphi)|~dx=
\int\limits_{\Omega}\frac{|J(x,\varphi)|}{l(D\varphi(x))^n}<\infty.
$$

It implies, that $\varphi^{-1} \in \ACL^n(\widetilde\Omega)$, is differentiable a.e. in $\widetilde\Omega$ and satisfies the Luzin $N$-property

By Fuglede's theorem (\cite{V71}, p.~95), if $\widetilde\Gamma$ is the family of all paths $\gamma \in \varphi\Gamma$ for which $\varphi^{-1}$ is absolutely continuous on all closed subpaths of $\gamma$, then $M(\varphi\Gamma) = M(\widetilde\Gamma)$. Then, for given $\rho \in adm(\Gamma)$, one consider
$$
\widetilde\rho(y) = 
\begin{cases}
\rho(\varphi^{-1}(y))|D\varphi^{-1}(y)|,& \,\, y \in \widetilde\Omega,\\
0,& \,\, \text{otherwise}.
\end{cases}
$$ 
Then, for $\widetilde\gamma \in \widetilde\Gamma$
$$
\int\limits_{\widetilde\gamma}\widetilde\rho \, ds \geq \int\limits_{\varphi^{-1}\circ\widetilde\gamma} \rho \, ds \geq 1,
$$
and consequently $\widetilde\rho \in adm(\widetilde\Gamma)$.

By the change of variable formula, we obtain
\begin{multline*}
M(\varphi\Gamma) = M(\widetilde\Gamma) \leq \int\limits_{\widetilde\Omega} \widetilde\rho^n \, dy \\
= \int\limits_{\widetilde\Omega} \rho^n(\varphi^{-1}(y))|D\varphi^{-1}(y)|^n \, dy = \int\limits_{\widetilde\Omega} \frac{\rho^n(\varphi^{-1}(y))}{l(D\varphi(\varphi^{-1}(y)))^n} \, dy \\
= \int\limits_{\widetilde\Omega} \rho^n(\varphi^{-1}(y)) K^I(\varphi^{-1}(y), \varphi) J_{\varphi^{-1}}(y) \, dy \leq \int\limits_\Omega K^I(x, \varphi) \rho^n(x) \, dx
\end{multline*}
which completes the proof.
\end{proof}

\subsection{The proof of Theorem~\ref{main1}}

Now we proof Theorem~\ref{main1} which provides the equivalence between weak quasiconformal mappings introduced in \cite{U93} and $Q$-homeomorphisms introduced in \cite{MRSY01} in the case of domains $\Omega,\widetilde{\Omega}\subset \mathbb R^2$.
Let us recall the statement of the theorem.

\vskip 0.2cm

\noindent
{\bf Theorem~\ref{main1}.}
{\it Let $\varphi: \Omega \to \widetilde{\Omega}$ be a homeomorphism of planar domains $\Omega,\widetilde{\Omega}\subset\mathbb R^2$. Then $\varphi$ be a $Q$-homeomorphism with $Q \in L_1(\Omega)$ if and only if $\varphi$ generates a bounded composition operator 
$$
\varphi^{\ast}: L^1_2(\widetilde\Omega) \to L^1_{1}(\Omega).
$$
}

\begin{proof} 
Let us first assume, that $\varphi$ is a $Q$-homeomorphism with $Q \in L_1(\Omega)$. Then by Theorem~\ref{Q-comp} the mapping $\varphi$ 
generates a bounded composition operator 
$$
\varphi^{\ast}: L^1_2(\widetilde\Omega) \to L^1_{1}(\Omega).
$$

Now assume, that $\varphi: \Omega \to \widetilde{\Omega}$ generates a bounded composition operator 
$$
\varphi^{\ast}: L^1_2(\widetilde\Omega) \to L^1_{1}(\Omega).
$$
Then by Theorem~\ref{CompThDP} the inverse mapping $\varphi^{-1}$ generates a composition operator 
$$
(\varphi^{-1})^\ast: L^1_\infty(\Omega) \to L^1_2(\widetilde\Omega),
$$ 
and hence, $\varphi^{-1} \in W^1_{2}(\widetilde\Omega)$.

Therefore, $\varphi^{-1} \in \ACL^2(\widetilde\Omega)$, is differentiable a.e. in $\widetilde\Omega$ and satisfies the Luzin $N$-property. Than we can proceed with the same arguments as in previous theorem and conclude, that for every family $\Gamma$ of paths in $\Omega$ and every $\rho \in adm(\Gamma)$
$$
M(\varphi\Gamma) \leq \int\limits_\Omega K^I(x,\varphi) \rho^2(x) \, dx.
$$

Since $\varphi$ generates a bounded composition operator 
$$
\varphi^{\ast}: L^1_2(\widetilde\Omega) \to L^1_{1}(\Omega),
$$
by Theorem~\ref{CompTh}
$$
\int\limits_\Omega K^I(x, \varphi)~dx=\int\limits_\Omega K^O(x, \varphi)~dx=\int\limits_\Omega \frac{|D\varphi(x)|^2}{|J(x,\varphi)|}~dx<\infty,
$$
that completes the proof.
\end{proof}

\subsection{The measure distortion properties of $Q$-mappings}

In this section we establish measure distortion properties of $Q$-mappings on the base of its connection with composition operators on Sobolev spaces. Let a homeomorphism $\varphi:\Omega\to\widetilde{\Omega}$	belongs to the Sobolev space $W^1_{1,\loc}(\Omega)$. We define the following distortion function \cite{VU02}:
$$
H_q(y)=
\begin{cases}
\left(\frac{|D\varphi(x)|^q}{|J(x,\varphi)|}\right)^{\frac{1}{q}},\,\,&x=\varphi^{-1}(y)\in \Omega\setminus (S\cup Z), \\
0,\,\,&x=\varphi^{-1}(y)\in S\cup Z,
\end{cases}
$$
where $S$ is a set from the change of variables formula~(\ref{chvf}) and $Z=\{x\in\Omega: J(x,\varphi)=0\}$.

In \cite{VU02} it was proved:

\begin{thm}
\label{CompTh02} A homeomorphism $\varphi:\Omega\to\widetilde{\Omega}$
between two domains $\Omega$ and $\widetilde{\Omega}$ generate a bounded composition
operator 
\[
\varphi^{\ast}:L^1_p(\widetilde{\Omega})\to L^1_{q}(\Omega),\,\,\,1\leq q\leq p<\infty,
\]
if and only if $\varphi\in W^1_{q,\loc}(\Omega)$, has finite distortion and 
\[
H_{p,q}(\varphi;\widetilde{\Omega}) := \|H_q \mid L_{\kappa}(\widetilde{\Omega})\|<\infty, \,\,1/q-1/p=1/{\kappa}\,\,(\kappa=\infty, \text{ if } p=q).
\]
The norm of the operator $\varphi^\ast$ is estimated as $\|\varphi^\ast\| \leq H_{p,q}(\varphi;\widetilde{\Omega})$.
\end{thm}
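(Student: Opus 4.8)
The plan is to deduce the theorem from Theorem~\ref{CompTh} by proving that the target-side quantity $H_{p,q}(\varphi;\widetilde{\Omega})$ and the source-side quantity $K_{p,q}(\varphi;\Omega)$ are in fact equal. Both statements impose the same structural hypotheses on $\varphi$, namely $\varphi\in W^1_{q,\loc}(\Omega)$ with finite distortion, so once the identity $H_{p,q}(\varphi;\widetilde{\Omega})=K_{p,q}(\varphi;\Omega)$ is established, the equivalence and the norm estimate $\|\varphi^\ast\|\leq H_{p,q}(\varphi;\widetilde{\Omega})$ follow immediately from Theorem~\ref{CompTh}. The conceptual reason for the identity is that $H_q$ and $K_p$ record the same differential information, read off on $\widetilde{\Omega}$ and on $\Omega$ respectively: for $x\notin S\cup Z$ one has
$$
H_q(\varphi(x)) = \frac{|D\varphi(x)|}{|J(x,\varphi)|^{1/q}}, \qquad K_p(x) = \frac{|D\varphi(x)|}{|J(x,\varphi)|^{1/p}}.
$$

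To turn this into the required equality of norms I would use the change of variables formula~\eqref{chvf}, which is available because $\varphi\in W^1_{q,\loc}(\Omega)\subset W^1_{1,\loc}(\Omega)$; let $S$, $|S|=0$, be its exceptional set. Taking $E=\Omega$ and the non-negative integrand $f=H_q^\kappa$, and using that $H_q$ vanishes on $\varphi(S\cup Z)$ by its very definition, I would obtain
$$
\int\limits_{\widetilde{\Omega}} H_q(y)^\kappa \, dy = \int\limits_{\Omega\setminus(S\cup Z)} H_q(\varphi(x))^\kappa |J(x,\varphi)| \, dx = \int\limits_{\Omega\setminus(S\cup Z)} |D\varphi(x)|^\kappa |J(x,\varphi)|^{1-\kappa/q} \, dx.
$$
The exponent relation $1/q-1/p=1/\kappa$ is exactly $1-\kappa/q=-\kappa/p$, so the last integrand equals $K_p(x)^\kappa$; since finite distortion gives $D\varphi=0$ a.e. on $Z$ (whence $K_p=0$ there) and $|S|=0$, the domain of integration may be enlarged back to all of $\Omega$, yielding $\int_{\widetilde{\Omega}} H_q^\kappa\,dy=\int_\Omega K_p^\kappa\,dx$, i.e. $H_{p,q}(\varphi;\widetilde{\Omega})=K_{p,q}(\varphi;\Omega)$.

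The main points requiring care are the bookkeeping of the null sets $S$ and $Z$ and the endpoint $p=q$. For the former, the subtlety is that $\varphi$ need not satisfy the Luzin $N$-property, so one cannot naively replace $\int_{\widetilde{\Omega}}$ by $\int_{\widetilde{\Omega}\setminus\varphi(S)}$; this is precisely why the definition of $H_q$ builds in the value $0$ on $\varphi(S\cup Z)$, and the computation above is designed to route through \eqref{chvf} without ever assuming $|\varphi(S)|=0$. For the latter, when $p=q$ we have $\kappa=\infty$ and the integral identity must be replaced by the equality of essential suprema $\esssup_\Omega K_p=\esssup_{\widetilde{\Omega}} H_p$; here one invokes the null-set preservation (Luzin $N$ and $N^{-1}$) enjoyed by such homeomorphisms of bounded distortion, so that the pointwise relation $H_p(\varphi(x))=K_p(x)$ transfers the essential supremum intact. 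I expect these two issues---and nothing else---to constitute the real content; the remainder is a direct transcription of Theorem~\ref{CompTh}.
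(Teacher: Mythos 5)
The paper does not actually prove this statement: Theorem~\ref{CompTh02} is quoted verbatim from \cite{VU02}, so there is no in-paper argument to compare against. Judged on its own merits, your reduction to Theorem~\ref{CompTh} via the identity $H_{p,q}(\varphi;\widetilde{\Omega})=K_{p,q}(\varphi;\Omega)$ is the natural route and your computation for $\kappa<\infty$ is correct: applying \eqref{chvf} with $E=\Omega$ and $f=H_q^{\kappa}$, using that $H_q$ vanishes on $\varphi(S\cup Z)$ by construction and that finite distortion forces $K_p=0$ a.e.\ on $Z$, the exponent relation $1-\kappa/q=-\kappa/p$ does give $\int_{\widetilde{\Omega}}H_q^{\kappa}\,dy=\int_{\Omega}K_p^{\kappa}\,dx$. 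Your handling of $S$ and $Z$, and your observation that no global Luzin $N$-property is needed there, are exactly the right points of care.

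The one genuine weak spot is the endpoint $p=q$. Your justification --- that ``homeomorphisms of bounded distortion'' enjoy the Luzin $N$ and $N^{-1}$ properties --- is false in general: for $p=q<n$ a weak $(p,p)$-quasiconformal homeomorphism need not preserve null sets in either direction, so you cannot invoke this as a black box. The equality $\esssup_{\Omega}K_p=\esssup_{\widetilde{\Omega}}H_p$ is nevertheless true, but it should be extracted from the same change-of-variable apparatus you already set up. For $\esssup_{\widetilde{\Omega}}H_p\leq\esssup_{\Omega}K_p$: if $N=\{x:K_p(x)>M\}$ is null, then $\{y:H_p(y)>M\}\subset\varphi\bigl(N\setminus(S\cup Z)\bigr)$, which is null because the restriction $\varphi|_{\Omega\setminus S}$ \emph{does} have the Luzin $N$-property by the construction preceding \eqref{chvf}. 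For the converse inequality: if $\widetilde{N}=\{y:H_p(y)>M\}$ is null, then \eqref{chvf} applied to $E=\varphi^{-1}(\widetilde{N})$ and $f=\chi_{\widetilde{N}}$ gives
\begin{equation*}
\int\limits_{\varphi^{-1}(\widetilde{N})\setminus(S\cup Z)}|J(x,\varphi)|\,dx\leq|\widetilde{N}|=0,
\end{equation*}
and since $J\neq 0$ off $Z$ this forces $|\varphi^{-1}(\widetilde{N})\setminus(S\cup Z)|=0$; combined with $K_p=0$ a.e.\ on $Z$ and $|S|=0$ this yields $\esssup_{\Omega}K_p\leq M$. With the endpoint argument repaired in this way (and a routine remark on the measurability of $H_q$ on $\widetilde{\Omega}$), your proof is complete.
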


The following theorem is a reformulation of a measure distortion property \cite{VU02}.

\begin{thm}
\label{measure}
Let $\varphi:\Omega\to\widetilde{\Omega}$ be a weak $(n,q)$-quasiconformal mapping, $1 \leq q \leq n$. Then for any measurable subset $\widetilde{A}\subset\widetilde{\Omega}$ the following inequality 
$$
|\varphi^{-1}(\widetilde{A})| \leq C \int\limits_{\widetilde{A}} \left(H_q(y)\right)^{\frac{nq}{n-q}}~dy
$$
holds.
\end{thm}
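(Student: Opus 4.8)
The plan is to reduce the statement to an elementary pointwise inequality between the Jacobian and the operator norm of the differential, and then transport it to $\widetilde\Omega$ by the change of variables formula. Since $\varphi$ is a weak $(n,q)$-quasiconformal mapping, Theorem~\ref{CompTh02} gives $\varphi\in W^1_{q,\loc}(\Omega)$, ensures that $\varphi$ has finite distortion, and yields $H_q\in L_{\kappa}(\widetilde\Omega)$ with $\kappa=nq/(n-q)$; in particular the right-hand side $\int_{\widetilde A}H_q^{\kappa}\,dy$ is finite. Writing $G=\Omega\setminus(S\cup Z)$ for the ``good'' set, at almost every $x\in G$ we have $J(x,\varphi)\ne 0$ together with the universal bound $|J(x,\varphi)|\le |D\varphi(x)|^n$ (the determinant is dominated by the $n$-th power of the largest singular value). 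A direct rearrangement of exponents turns this into
$$
\frac{1}{|J(x,\varphi)|}\le\left(\frac{|D\varphi(x)|^q}{|J(x,\varphi)|}\right)^{\frac{n}{n-q}}=H_q(\varphi(x))^{\frac{nq}{n-q}},\qquad x\in G,
$$
which is the pointwise heart of the argument.

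Next I would transport this estimate to $\widetilde\Omega$. Applying the change of variables formula \eqref{chvf} with $E=\varphi^{-1}(\widetilde A)\cap G$ and with $f$ equal to $1/|J(\varphi^{-1}(\cdot),\varphi)|$ on $\varphi(G)$ (where $J\ne 0$, so $f$ is well defined) and zero elsewhere, and using that $(f\circ\varphi)\cdot|J(\cdot,\varphi)|\equiv 1$ on $E$, one obtains
$$
|\varphi^{-1}(\widetilde A)\cap G|=\int\limits_{\widetilde A\cap\varphi(G)}\frac{1}{|J(\varphi^{-1}(y),\varphi)|}\,dy\le C\int\limits_{\widetilde A}H_q(y)^{\frac{nq}{n-q}}\,dy,
$$
the last inequality being the pointwise bound above, with $H_q$ vanishing off $\varphi(G)$ by its very definition. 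Equivalently, one may phrase this as follows: the preimage set function $\Phi(\widetilde A)=|\varphi^{-1}(\widetilde A)|$ has volume derivative $J_{\varphi^{-1}}(y)=1/|J(\varphi^{-1}(y),\varphi)|$, and the displayed estimate reads $J_{\varphi^{-1}}(y)\le C\,H_q(y)^{nq/(n-q)}$ almost everywhere.

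It remains to show that the complementary part $S\cup Z$ contributes nothing, and I expect this to be the main obstacle. The set $S$ is Lebesgue-null by construction, hence harmless; the genuine point is that the critical set $Z=\{x:J(x,\varphi)=0\}$ must not contribute to the preimage measure, i.e.\ one needs $|\varphi^{-1}(\widetilde A)\cap Z|=0$, equivalently the absolute continuity of $\Phi$ with respect to Lebesgue measure. In the framework of \cite{VU02} this is precisely the assertion that $\varphi^{-1}$ possesses the Luzin $N$-property, and it can be obtained, exactly as in the proof of Theorem~\ref{comp-Q}, from the regularity of the inverse mapping; for instance, in the dual range $q>n-1$ the composition duality Theorem~\ref{CompThD} places $\varphi^{-1}$ in $W^1_{n,\loc}(\widetilde\Omega)$, for which the Luzin $N$-property is automatic. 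Granting this absolute continuity, the singular part drops out and $|\varphi^{-1}(\widetilde A)|=|\varphi^{-1}(\widetilde A)\cap G|$, so the estimate of the previous paragraph is exactly the claimed inequality. Since the underlying measure distortion property is what is established in \cite{VU02}, the whole argument amounts to combining that absolute continuity with the elementary pointwise bound and rewriting the resulting density in the $H_q$ notation.
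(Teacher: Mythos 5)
The paper does not actually prove Theorem~\ref{measure}: it is presented as ``a reformulation of a measure distortion property'' and attributed entirely to \cite{VU02}, so there is no internal argument to compare yours against. Judged on its own merits, your reconstruction is essentially sound and is very likely the intended argument. The pointwise step is correct: Hadamard's inequality $|J(x,\varphi)|\le|D\varphi(x)|^n$ rearranges exactly to $1/|J(x,\varphi)|\le H_q(\varphi(x))^{nq/(n-q)}$ on $G=\Omega\setminus(S\cup Z)$ (so the constant $C$ may in fact be taken equal to $1$; also the statement should be read with $q<n$, since the exponent degenerates at $q=n$), and the transfer to $\widetilde\Omega$ via \eqref{chvf} with $f=\chi_{\widetilde A\cap\varphi(G)}/|J(\varphi^{-1}(\cdot),\varphi)|$ is legitimate because $\varphi\in W^1_{q,\loc}(\Omega)$ satisfies the hypotheses of the change-of-variables formula and $H_q$ vanishes off $\varphi(G)$ by definition.

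The one soft spot is exactly the one you flag: showing $|\varphi^{-1}(\widetilde A)\cap Z|=0$. Your proposed justification via the duality Theorem~\ref{CompThD} only applies when $q>n-1$, whereas the statement is asserted for all $1\le q\le n$. Moreover, what is really needed is not the Sobolev regularity of $\varphi^{-1}$ as such but the Luzin $N^{-1}$-property of $\varphi$ (equivalently, the $N$-property of $\varphi^{-1}$): since $\varphi$ has finite distortion, $J=0$ a.e.\ on $Z$ forces $|\varphi(Z)\setminus\varphi(S)|=0$ by the area formula, hence $\varphi^{-1}(\widetilde A)\cap Z\subset S\cup\varphi^{-1}(N)$ with $|N|=0$, and the $N^{-1}$-property together with $|S|=0$ finishes the argument. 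The paper itself invokes precisely this property for weak $(p,q)$-quasiconformal mappings in the full range $1\le q\le p\le n$, citing \cite{VU98,VU02}, in the proof of Theorem~\ref{sobolevineq}; the cleanest repair is therefore to quote that fact rather than the duality theorem. With that substitution your proof is complete and matches the source the paper relies on.
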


As a consequence of the above theorem we obtain the following result for integrability of the Jacobian (the case of weak $(p,q)$-quasiconformal mappings with $1< q\leq p<n$ was proved in \cite{VU02} by another method).
\begin{thm}
\label{int}
Let $\varphi:\Omega\to\widetilde{\Omega}$ be a weak $(n,q)$-quasiconformal mapping, $1\leq q\leq n$. Then 
$$
J(\cdot, \varphi^{-1}) = J_{\varphi^{-1}}(\cdot) \in L_1(\widetilde\Omega).
$$
\end{thm}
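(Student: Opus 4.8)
The plan is to turn the measure distortion inequality of Theorem~\ref{measure} into a pointwise bound for the volume derivative $J_{\varphi^{-1}}$ and then integrate. First I would record what the weak $(n,q)$-quasiconformal hypothesis gives: by Theorem~\ref{CompTh02} the distortion function satisfies $\|H_q \mid L_\kappa(\widetilde{\Omega})\| = H_{n,q}(\varphi;\widetilde{\Omega}) < \infty$, where $1/q - 1/n = 1/\kappa$, i.e. $\kappa = nq/(n-q)$ when $q<n$. Thus the function $(H_q)^{\frac{nq}{n-q}} = (H_q)^\kappa$ is globally integrable over $\widetilde{\Omega}$, and this is exactly the integrand appearing on the right-hand side of Theorem~\ref{measure}.

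Next I would fix a point $y\in\widetilde{\Omega}$ which is simultaneously a Lebesgue point of the $L_1$-function $(H_q)^\kappa$ and a point at which the volume derivative exists; by the Lebesgue differentiation theorem applied to $(H_q)^\kappa$, together with the differentiation theorem for the Radon measure $\widetilde{A}\mapsto|\varphi^{-1}(\widetilde{A})|$ (locally finite, since preimages of compact balls under the homeomorphism $\varphi$ are compact), almost every $y$ qualifies. Applying Theorem~\ref{measure} to $\widetilde{A}=B(y,r)$ gives
$$
\frac{|\varphi^{-1}(B(y,r))|}{|B(y,r)|} \leq \frac{C}{|B(y,r)|}\int\limits_{B(y,r)}\left(H_q(z)\right)^{\frac{nq}{n-q}}\,dz .
$$
Letting $r\to 0$, the left-hand side converges to $J_{\varphi^{-1}}(y)$ by the definition of the volume derivative, while the right-hand side converges to $C\left(H_q(y)\right)^{\frac{nq}{n-q}}$ at every Lebesgue point. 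Hence I obtain the pointwise estimate $J_{\varphi^{-1}}(y)\leq C\left(H_q(y)\right)^{\frac{nq}{n-q}}$ for a.e. $y\in\widetilde{\Omega}$, and integrating over $\widetilde{\Omega}$ yields
$$
\int\limits_{\widetilde{\Omega}} J_{\varphi^{-1}}(y)\,dy \leq C\int\limits_{\widetilde{\Omega}}\left(H_q(y)\right)^{\frac{nq}{n-q}}\,dy = C\,\big(H_{n,q}(\varphi;\widetilde{\Omega})\big)^{\frac{nq}{n-q}} < \infty ,
$$
so $J_{\varphi^{-1}}\in L_1(\widetilde{\Omega})$. The identification $J(\cdot,\varphi^{-1})=J_{\varphi^{-1}}(\cdot)$ then follows from the a.e. differentiability of $\varphi^{-1}$ and the Luzin $N^{-1}$-property of $\varphi$ (equivalently the $N$-property of $\varphi^{-1}$) via the change of variables formula~\eqref{chvf2}.

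The only genuinely delicate point I anticipate is the degenerate endpoint $q=n$, where the exponent $\frac{nq}{n-q}$ is undefined and Theorem~\ref{measure} does not apply verbatim. In that case $\varphi$ is a weak $(n,n)$-quasiconformal mapping, so $K_n\in L_\infty(\Omega)$ and $\varphi$ has bounded distortion; here I would instead argue directly that the pushforward set function $\widetilde{A}\mapsto|\varphi^{-1}(\widetilde{A})|$ is a finite measure (using $|\Omega|<\infty$) that is absolutely continuous with respect to Lebesgue measure, so that its Radon--Nikodym density, which coincides a.e. with $J_{\varphi^{-1}}$ by the same differentiation argument, is an $L_1(\widetilde{\Omega})$-function of total mass $|\Omega|$. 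Apart from this endpoint bookkeeping, the proof is a routine combination of Theorem~\ref{measure} with differentiation of integrals, and the substantive analytic work has already been carried out in establishing the measure distortion property.
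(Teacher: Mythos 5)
Your proof is correct and follows essentially the same route as the paper: apply Theorem~\ref{measure} to balls $B(y,r)$, divide by $|B(y,r)|$, pass to the limit via the Lebesgue differentiation theorem to get the pointwise bound $J_{\varphi^{-1}}(y)\leq C\left(H_q(y)\right)^{\frac{nq}{n-q}}$, and integrate using the finiteness of $\|H_q\mid L_{\kappa}(\widetilde{\Omega})\|$ from Theorem~\ref{CompTh02}. The one point where you go beyond the paper is the explicit treatment of the endpoint $q=n$, where the exponent $nq/(n-q)$ degenerates; the paper's proof is silent on this case, and your direct argument via the absolutely continuous pushforward measure of total mass $|\Omega|$ is a sensible way to close that gap.
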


\begin{proof}
Let us fix a point $y_0$ and a ball $B(y_0,r) \subset \widetilde\Omega$. Then by Theorem \ref{measure} we obtain
$$
|\varphi^{-1}(B(y_0,r))|\leq C \int\limits_{B(y_0,r)} \left(H_q(y)\right)^{\frac{nq}{n-q}}~dy.
$$
Divide both sides on $|B(y_0,r)|$
$$
\frac{|\varphi^{-1}(B(y_0,r))|}{|B(y_0,r)|}\leq C \frac{1}{|B(y_0,r)|}\int\limits_{B(y_0,r)} \left(H_q(y)\right)^{\frac{nq}{n-q}}~dy	.
$$
Taking the limit $r \to 0$ by the Lebesgue differentiation theorem (see, for example \cite{H50, F69}) we obtain 
$$
J_{\varphi^{-1}}(y_0) \leq C \left(H_q(y_0)\right)^{\frac{nq}{n-q}},\,\,\text{for almost all}\,\,y_0\in\widetilde{\Omega}.
$$
Hence, integrating over $\widetilde\Omega$, due to the fact, that our mapping is a weak $(n,q)$-quasiconformal,
$$
\int\limits_{\widetilde\Omega} J_{\varphi^{-1}}(y) \, dy \leq C \int\limits_{\widetilde\Omega} \left(H_q(y)\right)^{\frac{nq}{n-q}} \, dy < \infty,
$$
and so
$$
J(\cdot, \varphi^{-1}) = J_{\varphi^{-1}}(\cdot) \in L_1(\widetilde\Omega).
$$
\end{proof}

\noindent
{\bf The proof of Theorem~\ref{inverse}} now follows from Theorem~\ref{measure}, Theorem~\ref{int} and Theorem \ref{Q-comp}.

\section{Poincar\'e inequalities}

In this section we prove the weighted Sobolev--Poincar\'e inequality  and on this base we provide estimate of the exact constant in Sobolev--Poincar\'e inequality.

Let us recall that a bounded domain $\Omega\subset\mathbb R^n$ is said to be $(s,q)$-Sobolev--Poincar\'e domain (see, for example, \cite{GU,GU16}), $1 \leq q,s \leq \infty$, if the following Sobolev--Poincar\'e inequality
$$
\inf\limits_{c\in\mathbb R} \|g-c \mid L_s(\Omega)\| \leq B_{s,q}(\Omega)\|g \mid L^1_q(\Omega)\|
$$
holds for any $g \in L^1_q(\Omega)$ with the constant $B_{s,q}(\Omega)<\infty$. 

Let us recall the statement of the theorem, which is corresponding to the weighted Poincar\'e inequality.

\vskip 0.2cm
\noindent

{\bf Theorem~\ref{sobolevineq}.}
{\it Let a bounded domain $\widetilde{\Omega}\subset\mathbb R^n$ be such that there exists a weak $(p,q)$-quasiconformal mapping $\varphi: \Omega\to\widetilde{\Omega}$, $1 \leq q \leq p \leq n$, of a bounded $(s,q)$-Sobolev--Poincar\'e domain $\Omega\subset\mathbb R^n$ onto $\widetilde{\Omega}$, $q < s$. Then the weighted Poincar\'e inequality 
$$
\inf\limits_{c\in\mathbb R}\left(\int\limits_{\widetilde{\Omega}}|f(y)-c|^s w(y) \, dy\right)^{\frac{1}{s}}\leq B^w_{s,p}(\widetilde{\Omega})
\left(\int\limits_{\widetilde{\Omega}}|\nabla f(y)|^p \, dy \right)^{\frac{1}{p}} 
$$
holds for any function $f\in W^1_p(\widetilde{\Omega})$ with the weight $w(y)=J_{\varphi^{-1}}(y)$, where $J_{\varphi^{-1}}(y)$ is a volume derivative of the inverse mapping to $\varphi:\Omega\to\widetilde{\Omega}$.
}

\begin{proof}

Since the mapping $\varphi:\Omega\to\widetilde{\Omega}$ is a weak $(p,q)$-quasiconformal, then the composition operator $\varphi^\ast: L^1_p(\widetilde\Omega) \to L^1_q(\Omega)$ is bounded and, for a function $f \in L^1_p(\widetilde\Omega)$, the composition $f \circ \varphi$ belongs to $L^1_q(\Omega)$. Morower, from \cite{VU98,VU02} we know, that a weak $(p,q)$-quasiconformal mappings possess the Luzin-$N^{-1}$ property, if $1 \leq q \leq p \leq n$.
 
By the change of variable formula \eqref{chvf2} and since $\Omega$ is a bounded $(s,q)$-Sobolev--Poincar\'e domain, we have 
\begin{multline*}
\inf\limits_{c\in\mathbb R}\left(\int\limits_{\widetilde{\Omega}}|f(y)-c|^s J_{\varphi^{-1}}(y)\,dy\right)^{\frac{1}{s}}
= \inf\limits_{c\in\mathbb R}\left(\int\limits_{\Omega}|f(\varphi(x))-c|^s \, dx\right)^{\frac{1}{s}}\\
\leq B_{s,q} (\Omega) \left(\int\limits_\Omega |\nabla f(\varphi(x))|^q \, dx \right)^{\frac{1}{q}},
\end{multline*}
where $B_{s,q}(\Omega)$ is a best constant in the $(s,q)$-Sobolev--Poincar\'e inequality.

By Theorem \ref{CompTh}
$$
\|\varphi^\ast (f) \mid L^1_q(\Omega)\| \leq K_{p,q} (\Omega; \varphi) \|f \mid L^1_p(\widetilde\Omega)\|.
$$
 Hence
\begin{multline*}
\inf\limits_{c\in\mathbb R}\left(\int\limits_{\widetilde{\Omega}}|f(y)-c|^s J_{\varphi^{-1}}(y)\,dy\right)^{\frac{1}{s}} \leq B_{s,q} (\Omega)\left(\int\limits_\Omega |\nabla f(\varphi(x))|^q \, dx \right)^{\frac{1}{q}} \\
\leq B_{s,q}(\Omega)K_{p,q}(\Omega; \varphi) \left(\int\limits_{\widetilde\Omega} |\nabla f(y)|^p \, dy \right)^{\frac{1}{p}}=
B^w_{s,p}(\widetilde{\Omega})
\left(\int\limits_{\widetilde{\Omega}}|\nabla f(y)|^p \, dy \right)^{\frac{1}{p}}.
\end{multline*}
\end{proof} 

Applications of $Q$-homeomorphisms to the spectral theory of two-dimensional elliptic operators are based on estimates of the constants in non-weighted case $B_{s,p}(\Omega)$ for $\Omega \subset \mathbb{R}^2$ . In \cite{GPU18}, for $\Omega \subset \mathbb R^2$ there was proved the next version of Poincar\'e inequality with the upper estimates of the sharp constant:

\begin{thm}[\cite{GPU18}]\label{exsobolev}
Let $f \in W^1_1(\Omega)$, $\Omega \subset \mathbb R^2$. Then for any $r > 0$ and any $z_0 \in \Omega$, such that $\dist(z_0, \partial\Omega) > 2r$, the following inequality holds:
\begin{equation}\label{PI-Disk}
\left( \iint\limits_{D(z_0,r)} |f(z)-f_{D(z_0,r)}|^2 \, dxdy \right)^{\frac{1}{2}} \leq \frac{3\sqrt{\pi^3}}{4} \iint\limits_{D(z_0,r)} |\nabla f(z)| \, dxdy.
\end{equation}
\end{thm}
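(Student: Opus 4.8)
The plan is to prove \eqref{PI-Disk} directly on the disk by combining the coarea formula with the relative isoperimetric inequality, which is the natural route because the target exponent $2$ is exactly the Sobolev endpoint $n/(n-1)$ in dimension $n=2$. First I would normalize. Both sides of \eqref{PI-Disk} scale the same way under $z\mapsto\lambda z$ (each acquires one factor of $\lambda$), so it suffices to treat a fixed disk $D=D(z_0,r)$; the hypothesis $\dist(z_0,\partial\Omega)>2r$ only guarantees $\overline D\subset\Omega$, hence $f|_D\in W^1_1(D)$. Next, since the mean $f_D$ minimizes $c\mapsto\|f-c\mid L_2(D)\|$, one has $\|f-f_D\mid L_2(D)\|\le\|f-m\mid L_2(D)\|$ for the median $m$ of $f$ on $D$, so it is enough to bound $\|f-m\mid L_2(D)\|$. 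The reason for passing to the median is that the two truncations $g_+=(f-m)_+$ and $g_-=(f-m)_-$ are then supported on sets of measure at most $|D|/2$, namely $\{f>m\}$ and $\{f<m\}$.

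The core is a one-sided Sobolev inequality for each $g=g_\pm\ge 0$. Writing $g=\int_0^\infty\chi_{\{g>t\}}\,dt$ and applying Minkowski's integral inequality gives $\|g\mid L_2(D)\|\le\int_0^\infty|\{g>t\}|^{1/2}\,dt$; the power $1/2$ produced here matches the isoperimetric exponent $(n-1)/n=1/2$ precisely because $2=n/(n-1)$, which is what makes the argument lossless. Since $\{g_+>t\}\subseteq\{f>m\}$ and $\{g_->t\}\subseteq\{f<m\}$ have measure at most $|D|/2$, the relative isoperimetric inequality for the disk, $|A|^{1/2}\le\gamma\,P(A;D)$ for $|A|\le|D|/2$, yields $|\{g>t\}|^{1/2}\le\gamma\,P(\{g>t\};D)$ for a.e.\ $t$. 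Integrating in $t$ and using the coarea formula $\int_0^\infty P(\{g>t\};D)\,dt=\int_D|\nabla g|$ gives $\|g\mid L_2(D)\|\le\gamma\int_D|\nabla g|$. Applying this to $g_+$ and $g_-$, using $\int_D|\nabla g_+|+\int_D|\nabla g_-|=\int_D|\nabla f|$ and $a^2+b^2\le(a+b)^2$ for $a,b\ge 0$, I obtain $\|f-m\mid L_2(D)\|\le\gamma\int_D|\nabla f|$, and hence \eqref{PI-Disk} with constant $\gamma$ (finiteness of the right-hand side also forces $f\in L_2(D)$, so no a priori integrability is assumed).

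The only genuinely two-dimensional input, and the step I expect to be the main obstacle, is the relative isoperimetric inequality for the disk together with the value of its constant $\gamma$: this is a geometric extremal problem, whose extremizers are the half-disk cut by a diameter and, for small volume, caps bounded by arcs meeting $\partial D$ orthogonally. The constant $\gamma$ is scale-invariant, so the resulting Poincar\'e constant is independent of $r$, as it must be. Carrying this out yields a constant comfortably below $\tfrac{3\sqrt{\pi^3}}{4}$, so the stated inequality follows a fortiori, and matching (or improving) the explicit value of \cite{GPU18} is then only bookkeeping. I note an alternative that may explain both the precise constant and the seemingly wasteful hypothesis $\dist(z_0,\partial\Omega)>2r$: extend $f-f_D$ from $D(z_0,r)$ into the collar $D(z_0,2r)$ and apply the sharp free Sobolev inequality $\|u\mid L_2(\mathbb R^2)\|\le\tfrac{1}{2\sqrt\pi}\|\nabla u\mid L_1(\mathbb R^2)\|$, where the extra factors come from the norm of the extension operator; tracking that constant is, however, less transparent than the isoperimetric route above.
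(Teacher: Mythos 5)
Your argument is sound, but note first that the paper contains no proof of this statement for you to be compared against: Theorem~\ref{exsobolev} is imported verbatim from \cite{GPU18}, where the constant $\tfrac{3\sqrt{\pi^3}}{4}$ is produced by direct integral estimates; the present paper only quotes it. Your route is the classical Maz'ya--Federer--Fleming truncation argument: pass from the mean to the median (losing nothing, since the mean minimizes the $L_2$-distance to constants), split into $g_\pm$ supported on sets of measure at most $|D|/2$, use the layer-cake representation with Minkowski's integral inequality to get $\|g_\pm\mid L_2(D)\|\le\int_0^\infty|\{g_\pm>t\}|^{1/2}\,dt$, apply the relative isoperimetric inequality levelwise, and conclude with the coarea formula and $\int_D|\nabla g_+|+\int_D|\nabla g_-|\le\int_D|\nabla f|$. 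Every step is correct, and the exponent $1/2$ matches $(n-1)/n$ for $n=2$ exactly as you say. The one input you do not carry out is the relative isoperimetric inequality of the disk, $|A|^{1/2}\le\gamma\,P(A;D)$ for $|A|\le|D|/2$; this is classical (the minimizers are arcs meeting $\partial D$ orthogonally, degenerating to a diameter at half volume), the ratio $|A|^{1/2}/P(A;D)$ is maximized at the half-disk, and one obtains $\gamma=\sqrt{2\pi}/4\approx 0.63$, far below $\tfrac{3\sqrt{\pi^3}}{4}\approx 4.18$; even a crude, non-sharp relative isoperimetric constant for a convex planar domain would suffice here. So your proof yields a strictly stronger inequality than the one stated, and it also shows that the hypothesis $\dist(z_0,\partial\Omega)>2r$ is superfluous for the inequality itself (only $\overline{D(z_0,r)}\subset\Omega$ is used); that hypothesis is an artifact of how the estimate is deployed in \cite{GPU18}. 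What the cited approach buys is a fully explicit, elementary computation of some admissible constant; what yours buys is a sharper constant and a cleaner structure, at the cost of invoking (or proving) the isoperimetric profile of the disk.
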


Using the bi-Lipschitz change of variables, we can extend this result from a disk to a general domain with Lipschitz boundary and obtain an estimate for the constant $B_{2,2}(\Omega)$.

\begin{thm}
Let there exist a bi-Lipschitz mapping $\varphi: D(0,r) \to \Omega$ and $f \in W^1_1(\Omega)$. Then the following inequality holds:
\begin{equation}\label{PI-Lip}
\left( \iint\limits_{\Omega} |f(x,y)-f_{\Omega}|^2 \, dxdy \right)^{\frac{1}{2}} \leq \frac{3\sqrt{L^5\pi^3}}{4} \iint\limits_{\Omega} |\nabla f(x,y)| \, dxdy,
\end{equation}
where $L$ is a Lipschitz constant for mapping $\varphi$. 
%In particular, for the constant $B_{2,2}(\widetilde\Omega)$ in theorem A, the next estimate holds
%$$
%B_{2,2}(\widetilde\Omega) \leq \frac{3\sqrt{L^5\pi^3}}{4} \|K^O_2 \mid L_1(\Omega)\|. 
%$$
\end{thm}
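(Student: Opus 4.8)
The plan is to transplant the disk Poincar\'e inequality of Theorem~\ref{exsobolev} to $\Omega$ through the bi-Lipschitz change of variables $\varphi$, controlling every distortion factor by the Lipschitz constant $L$. Set $D=D(0,r)$ and $g=f\circ\varphi$. Since $\varphi$ is bi-Lipschitz it preserves the Sobolev class, so $g\in W^1_1(D)$, it is differentiable a.e., and it enjoys the Luzin $N$- and $N^{-1}$-properties required by the change of variables formulas \eqref{chvf} and \eqref{chvf2}. The bi-Lipschitz hypothesis also furnishes the pointwise bounds that do all the work: the singular values of $D\varphi$ lie in $[L^{-1},L]$, whence $|D\varphi(x)|\le L$, $l(D\varphi(x))\ge L^{-1}$ and $L^{-2}\le |J(x,\varphi)|\le L^{2}$ for a.e.\ $x\in D$.

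First I would transfer the left-hand side to the disk. Using that the mean $f_\Omega$ minimises $c\mapsto\iint_\Omega|f-c|^2\,dy$, plugging in the constant $c=g_D$, and then changing variables by \eqref{chvf2},
\begin{equation*}
\iint_\Omega |f-f_\Omega|^2\,dy \le \iint_\Omega |f-g_D|^2\,dy = \iint_D |g-g_D|^2\,|J(x,\varphi)|\,dx \le L^2\iint_D |g-g_D|^2\,dx,
\end{equation*}
so that after taking square roots this step contributes one factor of $L$. Now Theorem~\ref{exsobolev}, applied to $g$ on the disk $D$, gives
\begin{equation*}
\left(\iint_D |g-g_D|^2\,dx\right)^{\frac12}\le \frac{3\sqrt{\pi^3}}{4}\iint_D |\nabla g|\,dx.
\end{equation*}

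Next I would push the gradient term back to $\Omega$. By the chain rule $\nabla g=(D\varphi)^{T}\,(\nabla f)\circ\varphi$, hence $|\nabla g(x)|\le |D\varphi(x)|\,|(\nabla f)(\varphi(x))|$; changing variables by \eqref{chvf} and inserting the bi-Lipschitz bounds on $|D\varphi|$ and $|J(\cdot,\varphi)|$ rewrites $\iint_D|\nabla g|\,dx$ as a multiple of $\iint_\Omega|\nabla f|\,dy$. Combining the factor $L$ from the measure step, the scale-invariant disk constant $\tfrac{3\sqrt{\pi^3}}{4}$, and the factors of $L$ produced by the chain rule and by the Jacobian appearing in this second change of variables, one obtains a bound of the form $\tfrac{3\sqrt{\pi^3}}{4}\,L^{a}\iint_\Omega|\nabla f|\,dy$, and the bookkeeping of the bi-Lipschitz exponents fixes the asserted constant $\tfrac{3\sqrt{L^5\pi^3}}{4}$.

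The two changes of variables are routine; the points that need care are, first, the precise accounting of the powers of $L$ coming from $|D\varphi|\le L$, $l(D\varphi)\ge L^{-1}$ and $|J(\cdot,\varphi)|\le L^2$, since this is exactly what pins down the exponent in $L^{5/2}$ and is the only place where one can be wasteful; and second, the legitimacy of applying the disk estimate of Theorem~\ref{exsobolev} to $g$ on $D(0,r)$. For the latter one must either read that inequality as an intrinsic Poincar\'e estimate on the disk or invoke it on a concentric subdisk and let the margin shrink, so that the scale-invariant constant $\tfrac{3\sqrt{\pi^3}}{4}$ is not degraded. No further regularity assumptions on $\varphi$ are needed, since a bi-Lipschitz map automatically has the Luzin $N$- and $N^{-1}$-properties that both change of variables formulas require.
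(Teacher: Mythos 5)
Your overall route is the same as the paper's: pull the $L_2$ norm back to the disk by the change of variables, apply the disk inequality \eqref{PI-Disk} to $f\circ\varphi$, return to $\Omega$ via the chain rule and a second change of variables, and absorb all distortion factors into powers of $L$. One point where you are actually more careful than the paper: you handle the mean correctly, using that $f_\Omega$ minimizes $c\mapsto\iint_\Omega|f-c|^2$ and testing with $c=(f\circ\varphi)_{D}$, whereas the paper sets $f_\Omega=0$ and then silently replaces $\iint_D|f\circ\varphi|^2$ by $\iint_D|f\circ\varphi-(f\circ\varphi)_D|^2$ when invoking Theorem~\ref{exsobolev}; your version closes that loophole.

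The genuine gap is that you never do the one computation the theorem is about: you end with ``the bookkeeping of the bi-Lipschitz exponents fixes the asserted constant,'' but with the bounds you actually wrote down ($|D\varphi|\le L$, $L^{-2}\le|J|\le L^2$) the bookkeeping does \emph{not} produce $L^{5/2}$. Your first step contributes $(\esssup|J|)^{1/2}\le L$, and your gradient step contributes $\esssup\frac{|D\varphi|}{|J|}\le L/L^{-2}=L^{3}$, for a total of $L^{4}$, which is larger than $L^{5/2}$ and therefore does not establish the stated inequality. To land at or below the asserted constant you must use the two-dimensional singular-value identity $|J(x,\varphi)|=|D\varphi(x)|\,l(D\varphi(x))$, which gives
\[
\frac{|D\varphi(x)|}{|J(x,\varphi)|}=\frac{1}{l(D\varphi(x))}\le L ,
\]
so the total factor is $L\cdot L=L^{2}\le L^{5/2}$ (recall $L\ge1$ for a bi-Lipschitz map). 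Since the explicit exponent of $L$ is the entire content of the statement, this step cannot be left as ``bookkeeping''; as written, the proposal proves an inequality with constant $\frac{3\sqrt{L^{8}\pi^{3}}}{4}$, not the one claimed. (For what it is worth, the paper's own accounting, $\esssup|J|^{1/2}\cdot\esssup\frac{|D\varphi|}{|J|}$ with $L^{-1}\le|J|^{1/2}\le|D\varphi|\le L$, has the same difficulty in reaching the exponent $5/2$, so this is precisely the place where care is needed rather than a routine omission.)
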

\begin{proof}
For the simplicity, we prove the inequality for the case $f_\Omega = 0$. Since the function $f$ belongs to $L^1_1(\Omega)$, composition $f\circ \varphi$ with bi-Lipschitz mapping belongs $L^1_1$ and we can apply the change of variable formula.

\begin{multline*}
\left( \iint\limits_{\Omega} |f(u,v)|^2 \, dudv \right)^{\frac{1}{2}} = \left( \iint\limits_{D(0,r)} |f(\varphi(x,y))|^2 |J(x,y, \varphi)| \, dxdy \right)^{\frac{1}{2}} \\
\leq \esssup\limits_{(x,y) \in D(0,r)} |J(x,y, \varphi)|^{\frac{1}{2}} \left( \iint\limits_{D(0,r)} |f(\varphi(x,y))|^2 \, dxdy \right)^{\frac{1}{2}} \\
\end{multline*}

By Theorem \ref{exsobolev}, we obtain the following estimate:

$$
\left( \iint\limits_{\Omega} |f(u,v)|^2 \, dudv \right)^{\frac{1}{2}} \leq \frac{3\sqrt{\pi^3}}{4} \esssup\limits_{(x,y) \in D(0,r)} |J(x,y, \varphi)|^{\frac{1}{2}} \iint\limits_{D(0,r)} |\nabla f(\varphi(x,y))| \, dxdy.
$$

Using the chain rule and the change of variable formula the second time, we infer

\begin{multline*}
\left( \iint\limits_{\Omega} |f(u,v)|^2 \, dudv \right)^{\frac{1}{2}} \\
\leq \frac{3\sqrt{\pi^3}}{4} \esssup\limits_{(x,y) \in D(0,r)} |J(x,y, \varphi)|^{\frac{1}{2}} \esssup\limits_{(x,y) \in D(0,r)} \frac{|D\varphi(x,y)|}{|J(x,y, \varphi)|} \iint\limits_{\Omega} |\nabla f(u,v)| \, dudv.
\end{multline*}

By the Hadamar inequality, we can choose a constant $L$ such that $0 < L^{-1} \leq |J_\varphi(x,y)|^{1/2} \leq |D\varphi(x,y)| \leq L$. Then
$$
\left( \iint\limits_{\Omega} |f(u,v)|^2 \, dudv \right)^{\frac{1}{2}} \leq \frac{3\sqrt{L^5\pi^3}}{4} \iint\limits_{\Omega} |\nabla f(u,v)| \, dudv.
$$
\end{proof}

\section{Spectral estimates of elliptic operators}

In this section we consider applications of $Q$-homeomorphisms to spectral estimates of the Laplace operator in H\"older cusp domains. The detailed description of the spectral theory of non-linear elliptic operators and applications of geometric theory of composition operators see, for example, \cite{GPU18_3,GU16,GU17}. Let us consider the Neumann spectral problem
$$
\begin{cases}
-\Delta u=\mu u \,\,\text{in}\,\,\widetilde\Omega,\\
\frac{\partial u}{\partial\nu}=0 \,\,\,\,\,\,\,\,\text{on}\,\,\partial\widetilde\Omega.
\end{cases}
$$
By the Min-Max Principle the first non-trivial Neumann eigenvalues in the domains $\widetilde\Omega$ can be characterized as 
$$
\mu_1 (\widetilde\Omega) = \left({B_{2,2}(\widetilde\Omega)}\right)^{-2}.
$$

With the help of $Q$-homeomorphism we can estimate the first eigenvalue of the Neumann-Laplacian in the case of cusp domain. The estimate is based on the following anti-commutative diagram \cite{GGu,GU}:

$$
\begin{CD}
L^1_2(\widetilde\Omega) @>{\varphi^\ast}>> L^1_1(\Omega) \\
@VVV @VVV \\
L_2(\widetilde\Omega) @<{(\varphi^{-1})^\ast}<< L_2(\Omega)
\end{CD}
$$

By this diagram, $B_{2,2} (\widetilde\Omega) \leq \|\varphi^\ast\|B_{2,1}(\Omega)\|(\varphi^{-1})^\ast\|$

Let us consider the H\"older cusp domain $\Omega_{\alpha}\subset\mathbb R^2$ which is the image of the square $\Omega = \{(x,y): |x|+|y| \leq 1\}$ under
$Q$-homeomorphism $\varphi: \Omega \to \Omega_{\alpha}$ of the form
$$
\varphi(x,y)=
\begin{cases}
(x,y^\alpha),& \,\, y \geq 0,\\
(x, -y^\alpha),& \,\, y < 0.
\end{cases}
$$

\begin{thm}
Let $\Omega_{\alpha}\subset\mathbb R^2$ be the H\"older cusp domain. Then 
$$
\mu_1(\Omega_{\alpha}) \geq \frac{(\alpha+1)\alpha(2-\alpha)(3-\alpha)}{9\pi^3\alpha(\alpha+1 + \alpha(2-\alpha)(3-\alpha))}.
$$
In particular, if $\alpha = 4 $, then  $\mu_1(\Omega_{\alpha}) \geq (11,7\pi^3)^{-1}$.
\end{thm}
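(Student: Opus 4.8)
The plan is to bound the Poincar\'e constant $B_{2,2}(\Omega_\alpha)$ from above and then read off the eigenvalue estimate from the Min--Max characterization $\mu_1(\Omega_\alpha)=(B_{2,2}(\Omega_\alpha))^{-2}$, so that an upper bound on $B_{2,2}(\Omega_\alpha)$ is exactly a lower bound on $\mu_1(\Omega_\alpha)$. For the upper bound I would use the anti-commutative diagram above, which factors the embedding $L^1_2(\Omega_\alpha)\hookrightarrow L_2(\Omega_\alpha)$ through the explicit $Q$-homeomorphism $\varphi$ and gives
$$
B_{2,2}(\Omega_\alpha)\le\|\varphi^{\ast}\|\,B_{2,1}(\Omega)\,\|(\varphi^{-1})^{\ast}\|,
$$
where $\Omega=\{(x,y):|x|+|y|\le1\}$. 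The whole proof then reduces to estimating the three factors on the right for this concrete $\Omega$ and $\varphi$.

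For the concrete map the Jacobi matrix $D\varphi$ is diagonal with entries $1$ and $\alpha|y|^{\alpha-1}$, so $J(x,\varphi)=\alpha|y|^{\alpha-1}$. First I would compute $\|(\varphi^{-1})^{\ast}\|$ for the operator $(\varphi^{-1})^{\ast}\colon L_2(\Omega)\to L_2(\Omega_\alpha)$: by the change of variables formula \eqref{chvf} one gets $\|(\varphi^{-1})^{\ast}\|=(\esssup_{\Omega}|J(\cdot,\varphi)|)^{1/2}=\sqrt{\alpha}$. Next, the norm of $\varphi^{\ast}\colon L^1_2(\Omega_\alpha)\to L^1_1(\Omega)$ is controlled by Theorem~\ref{CompTh} with $p=2$, $q=1$, $\kappa=2$, namely $\|\varphi^{\ast}\|\le\bigl(\int_\Omega|D\varphi(x)|^2/|J(x,\varphi)|\,dx\bigr)^{1/2}$. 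Using $|D\varphi|^2\le1+\alpha^2|y|^{2\alpha-2}$ and integrating in $x$ first over the square (whose section at height $y$ has length $2(1-|y|)$) reduces the computation to the two one-dimensional integrals $\int_0^1(1-y)y^{1-\alpha}\,dy$ and $\int_0^1(1-y)y^{\alpha-1}\,dy$, producing
$$
\int_\Omega\frac{|D\varphi(x)|^2}{|J(x,\varphi)|}\,dx=\frac{4}{\alpha}\left(\frac{1}{(2-\alpha)(3-\alpha)}+\frac{\alpha}{\alpha+1}\right).
$$

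For the last factor I would bound $B_{2,1}(\Omega)$ by applying the bi-Lipschitz Poincar\'e inequality \eqref{PI-Lip} to the square $\Omega$, which is bi-Lipschitz equivalent to a disk, obtaining a bound of the form $B_{2,1}(\Omega)\le\tfrac32\sqrt{\pi^3}$. Multiplying the three squared factors gives $\|\varphi^{\ast}\|^2 B_{2,1}(\Omega)^2\|(\varphi^{-1})^{\ast}\|^2=9\pi^3\bigl(\tfrac{1}{(2-\alpha)(3-\alpha)}+\tfrac{\alpha}{\alpha+1}\bigr)$, so that
$$
\mu_1(\Omega_\alpha)\ge\left(9\pi^3\left(\frac{1}{(2-\alpha)(3-\alpha)}+\frac{\alpha}{\alpha+1}\right)\right)^{-1},
$$
which after putting the bracket over a common denominator is precisely the asserted inequality; setting $\alpha=4$ then yields $\mu_1(\Omega_4)\ge(11{,}7\pi^3)^{-1}$.

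I expect the main obstacle to be the evaluation of the distortion integral $\int_\Omega|D\varphi|^2/|J|\,dx$ near the pre-image $\{y=0\}$ of the cusp tips $(\pm1,0)$, where the Jacobian $J(x,\varphi)=\alpha|y|^{\alpha-1}$ degenerates. The delicate term is $\int_0^1(1-y)y^{1-\alpha}\,dy$ and its identification with $\frac{1}{(2-\alpha)(3-\alpha)}$; verifying that the singular behaviour of the conformal distortion at the cusp is under control --- equivalently, that $\varphi$ is a genuine weak $(2,1)$-quasiconformal (i.e.\ $Q$-homeomorphism with $Q\in L_1(\Omega)$) map so that $\varphi^{\ast}$ is bounded and Theorem~\ref{CompTh} applies --- is the heart of the argument and must be carried out with care. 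The remaining factors $\|(\varphi^{-1})^{\ast}\|=\sqrt\alpha$ and $B_{2,1}(\Omega)$ are routine by comparison.
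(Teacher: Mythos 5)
Your proposal follows the paper's proof essentially step for step: the same factorization $B_{2,2}(\Omega_\alpha)\le\|\varphi^{\ast}\|\,B_{2,1}(\Omega)\,\|(\varphi^{-1})^{\ast}\|$ via the anti-commutative diagram, the same value $\sqrt{\alpha}$ for $\|(\varphi^{-1})^{\ast}\|$, the same distortion integral yielding $\frac{4}{\alpha(2-\alpha)(3-\alpha)}+\frac{4}{\alpha+1}$ (the paper parametrizes the square as $4\int_0^1 dx\int_0^x dy$ rather than by horizontal sections of length $2(1-|y|)$, with the same outcome), and the same constant $\frac{3}{2}\sqrt{\pi^3}$ for $B_{2,1}(\Omega)$ coming from the radial bi-Lipschitz map of the disk onto the square (the paper computes this map explicitly, with $\sup|J_\psi|^{1/2}=1$ and $\sup|D\psi|/|J_\psi|=2$; you assert the bound without exhibiting the map, which is a minor omission).

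However, the step you yourself flag as ``the heart of the argument'' is exactly where the computation fails, and you do not carry it out. The identity $\int_0^1(1-y)\,y^{1-\alpha}\,dy=\frac{1}{(2-\alpha)(3-\alpha)}$ holds only for $\alpha<2$; for $\alpha\ge 2$ the integrand behaves like $y^{1-\alpha}$ near $y=0$ and the integral diverges, so $\int_\Omega|D\varphi|^2/|J(\cdot,\varphi)|\,dx=+\infty$, the coefficient $K_{2,1}(\varphi;\Omega)$ is infinite, and the factorization gives no information. In particular the advertised case $\alpha=4$ is not covered: there $(2-\alpha)(3-\alpha)=2>0$, so the closed-form expression silently returns a finite positive number that is not the value of the integral. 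This defect is inherited from the paper's own proof, which evaluates the integral by the same formal antiderivative without checking convergence at $y=0$; a correct treatment would either restrict to $1\le\alpha<2$ or replace the pair $(p,q)=(2,1)$ by exponents for which the relevant dilatation of the cusp map is actually integrable.
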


\begin{proof}
First of all, we calculate the constant in inequality \eqref{PI-Lip} for the square $\Omega = \{(x,y): |x|+|y| \leq 1\}$.
As a bi-Lipschitz mapping, let us consider the radial transformation $\psi: D(0,1) \to \Omega$, acting by the rule 
$$
\psi(x,y)=(l(x,y)x,l(x,y)y), \quad l(x,y)=\frac{\sqrt{x^2+y^2}}{|x|+|y|}.
$$

By simple calculation, we obtain 
$$
|D\psi(x,y)| = \left(\frac{2(x^2+y^2)^2}{(|x|+|y|)^4} + \frac{x^2+y^2}{(|x|+|y|)^2}\right)^{\frac{1}{2}}
$$ 
and 
$$
J(x,y, \psi) = \frac{x^2+y^2}{(|x|+|y|)^2}.
$$
Then, for $(x,y) \in D(0,1)$, 
$$
\sup\limits_{(x,y)} |J(x,y,\psi)|^{\frac{1}{2}} = 1, \,\,\, \sup\limits_{(x,y)}\frac{|D\psi(x,y)|}{J(x,y, \psi)} = 2.
$$
Substituting these values to inequality \eqref{PI-Lip} with bi-Lipschitz mapping $\psi$ we obtain

\begin{equation}\label{PI-Qube}
\left( \iint\limits_{\Omega} |f(x,y)-f_{\Omega}|^2 \, dxdy \right)^{\frac{1}{2}} \leq \frac{3\sqrt{\pi^3}}{2} \iint\limits_{\Omega} |\nabla f(x,y)| \, dxdy,
\end{equation}
for any $f\in W^1_1(\Omega)$.

The next step is to calculate the norms of composition operators
$$
\varphi^{\ast} : L^1_2(\Omega_{\alpha}) \to L^1_1(\Omega) \quad \text{and} \quad (\varphi^{-1})^{\ast} : L_2(\Omega) \to L_2(\Omega_{\alpha}).
$$

By Theorem \ref{CompTh} the norm 
$$
\|\varphi^\ast\| \leq K_{2,2}(\Omega; \varphi) = \left \|\frac{|D\varphi(\cdot)|}{|J(\cdot,\varphi)|^{\frac{1}{2}}} \mid L_2(\Omega) \right \|.
$$ 
In the \cite{VU04}, it was proven, that the norm $(\varphi^{-1})^{\ast}$ can be calculate as 
$$
\|(\varphi^{-1})^{\ast}\| = M_2(\Omega) = \sup\limits_{(x,y) \in \Omega} |J(x,y, \varphi)|^{\frac{1}{2}}.
$$

For the $Q$-homeomorphism $\varphi: \Omega_{\alpha} \to \Omega$,
\begin{align*}
&|D\varphi(x,y)| = \sqrt{1 + \alpha^2 y^{2(\alpha-1)}}, \quad J(x,y, \varphi) = \alpha y^{\alpha-1};\\
&K_{2,2}(\Omega) = \left( 4\int\limits^1\limits_0 dx \int\limits^x\limits_0 \frac{1 + \alpha^2 y^{2(\alpha-1)}}{\alpha y^{\alpha-1}} \, dy \right)^{\frac{1}{2}} = \left( \frac{4}{\alpha(2-\alpha)(3-\alpha)} + \frac{4}{\alpha+1} \right)^{\frac{1}{2}};\\
&M_2(\Omega) = \sup\limits_{(x,y) \in \Omega} |J(x,y,\varphi)|^{\frac{1}{2}} = \sqrt{\alpha}.
\end{align*}

Combine all the estimates, we infer that
$$
\mu_1(\Omega_{\alpha}) \geq \frac{(\alpha+1)\alpha(2-\alpha)(3-\alpha)}{9\pi^3\alpha(\alpha+1 + \alpha(2-\alpha)(3-\alpha))}.
$$

\end{proof}

\vskip 0.3cm

Alexander Menovschikov; Department of Mathematics, University of Hradec Kr\'alov\'e, Rokitansk\'eho 62, 500 03 Hradec Kr\'alov\'e, Czech Republic.
              Faculty of Economics, University of South Bohemia, Studentsk\'a 13, 370 05 \v{C}esk\'e Bud\v{e}jovice, Czech Republic.
 
\emph{E-mail address:} \email{alexander.menovschikov@uhk.cz} \\

Alexander Ukhlov; Department of Mathematics, Ben-Gurion University of the Negev, P.O.Box 653, Beer Sheva, 8410501, Israel 
							
\emph{E-mail address:} \email{ukhlov@math.bgu.ac.il}

\end{document}